\theoremstyle{plain}
\newtheorem*{thmA}{Theorem A}
\newtheorem{thm}{Theorem}[section]
\newtheorem{lem}[thm]{Lemma}
\theoremstyle{definition}
\newtheorem{dfn}[thm]{Definition}
\newcommand{\Z}{\mathbb{Z}}
\newcommand{\N}{\mathbb{N}}
\DeclareMathOperator{\Aut}{Aut}
\DeclareMathOperator{\Cl}{Cl}
\begin{document}

\title{An infinite family of strongly real Beauville $p$-groups}

\author[\c{S}.\ G\"ul]{\c{S}\"ukran G\"ul}
\address{Department of Mathematics\\ Middle East Technical University\\
	06800 Ankara, Turkey}
\email{gsukran@metu.edu.tr}

\keywords{Strongly real Beauville groups; triangle group\vspace{3pt}}

\thanks{The author is supported by the Spanish Government, grant MTM2014-53810-C2-2-P, the Basque Government, grant IT974-16, and the ERC Grant PCG-336983.}

\begin{abstract}
	We give an infinite family of non-abelian strongly real \break Beauville $p$-groups for every prime $p$ by considering the quotients of triangle groups, and indeed we prove that there are non-abelian strongly real Beauville $p$-groups of order $p^n$ for every $n\geq 3,5$ or $7$ according as $p\geq 5$ or $p=3$ or $p=2$.  This shows that there are strongly real Beauville $p$-groups exactly for the same orders for which there exist Beauville $p$-groups.
\end{abstract}	
	
\maketitle
	
\section{Introduction}

Let $G$ be a finite group. For a couple of elements $x,y \in G$, we define
\[
\Sigma(x,y)
=
\bigcup_{g\in G} \,
\Big( \langle x \rangle^g \cup \langle y \rangle^g \cup \langle xy \rangle^g \Big),
\]
that is, the union of all subgroups of $G$ which are conjugate to $\langle x \rangle$, to 
$\langle y \rangle$ or to $\langle xy \rangle$. Then $G$ is called a \emph{Beauville group\/} if the following conditions hold:
\begin{enumerate}
	\item $G$ is a $2$-generator group.
	\item There exists a pair of generating sets $\{x_1,y_1\}$ and $\{x_2,y_2\}$ of $G$ such that 
	$\Sigma(x_1,y_1) \cap \Sigma(x_2,y_2)=1$.
\end{enumerate}
Then $ \{x_1,y_1\}$ and $\{x_2,y_2\}$ are said to form a \emph{Beauville structure\/} for $G$.  We call $\{x_i, y_i, x_iy_i\}$ the \textit{triple} associated to $\{x_i, y_i\}$ for $i=1,2$.
The \textit{signature\/} of a triple is the tuple of orders of the elements in the triple.

Every Beauville group gives rise to a complex surface of general type which is known as a \emph{Beauville surface\/} of unmixed type. If we want Beauville surfaces to be real, then we work with the following sufficient condition on the Beauville structure.

\begin{dfn}
	Let $G$ be a Beauville group. We say that $G$ is \emph{strongly real\/} if there exist a Beauville structure $\{\{x_1,y_1\}, \{x_2,y_2\} \}$, an automorphism $\theta \in \Aut(G)$ and elements $g_i \in G$ for $i=1,2$ such that 
	\[
	g_i \theta(x_i)g_i^{-1}=x_i^{-1} \ \ \text{and} \ \ g_i \theta(y_i)g_i^{-1}=y_i^{-1}
	\]
	for $i=1,2$. Then the Beauville structure is called \emph{strongly real}.
\end{dfn}
In practice, it is convenient to take $g_1=g_2=1$. This is the condition we will use in this paper to find strongly real Beauville structures.

Abelian strongly real Beauville groups are easy to determine. Catanese \cite{cat} proved that a finite abelian group is a Beauville group if and only if it is isomorphic to $C_n \times C_n$, where $n>1$ and $\gcd(n,6)=1$.  Since for any abelian group inversion is an automorphism,
every abelian Beauville group is a strongly real Beauville group.

Thus, if $p\geq 5$ there are  infinitely many abelian strongly real Beauville $p$-groups. If the $p$-group is non-abelian, it is harder to construct a strongly real Beauville structure.

Recall that in \cite{SV}, Stix and Vdovina constructed
infinite series of Beauville $p$-groups by considering quotients of ordinary triangle groups (von Dyck groups). In particular this gives examples of non-abelian Beauville $p$-groups of arbitrarily large order. On the other hand, the first explicit infinite family of Beauville $2$-groups was constructed in \cite[Theorem 1]{BBPV}. However, Theorem 1 in \cite{BBPV} also shows that with one exception these Beauville $2$-groups are not strongly real. 

The earliest examples of non-abelian strongly real Beauville $p$-groups were given by Fairbairn in \cite{fai}, by constructing the following pair of $2$-groups.
The groups
\[
G= \langle x,y \mid x^8=y^8=[x^2,y^2]=(x^iy^j)^4=1 \ \text{for} \ i,j=1,2,3 \rangle,
\]
and
\[
G= \langle x,y \mid (x^iy^j)^4=1 \ \text{for} \ i,j=0,1,2,3 \rangle
\]
are strongly real Beauville groups of order $2^{13}$ and $2^{14}$, respectively. In both cases, the Beauville structure is $\{ \{x,y\}, \{ xyx, xyxyx\} \}$.

Also in \cite{fai}, he asked the following question: ``Are there infinitely many strongly real Beauville $p$-groups?''

If $p\geq3$ the author  \cite{gul}  has recently given a positive answer to this question. She constructed an infinite family of non-abelian strongly real Beauville $p$-groups by considering the lower central quotients of the free product of two cyclic groups of order $p$. However, this result does not cover the prime $2$ and also does not give a strongly real Beauville $p$-group of every possible order.
At around the same time, Fairbairn \cite{fai2} gave another infinite family of non-abelian strongly real Beauville $p$-groups for odd $p$, by using wreath products of cyclic $p$-groups.

In this paper, we give a new infinite family of non-abelian strongly real Beauville $p$-groups for every prime $p$. As a consequence, we show that there are non-abelian strongly real Beauville $p$-groups of order $p^n$ for every $n\geq 3$ if $p\geq5$, or $n\geq 5$ if $p=3$, or $n\geq 7$ if $p=2$,  and hence there are strongly real Beauville  $p$-groups exactly for the same orders for which there exist Beauville $p$-groups.

In order to obtain the result, we work with quotients of the triangle group $T=\langle  a,b \mid a^q=b^q=(ab)^r=1 \rangle$ where $p$ is a fixed prime, $q=p^k>2$ and $r=p^{k+1}$ or $p^k$, according as $p=3$ or $p \neq 3$. 
For every quotient which is Beauville, we get many different  strongly real Beauville structures. Furthermore, we not only get infinitely many strongly real Beauville $p$-groups but also infinitely many different signatures, because the signature of one of the triples of Beauville structures takes the value $(p^k, p^k, p^k)$ if $p\neq 3$ or the value $(3^k, 3^k, 3^{k+1})$. All these results follow from the main theorem of this paper, which is as follows.

\begin{thmA}
	Let $p$ be a prime and let $T=\langle  a,b \mid a^q=b^q=(ab)^r=1 \rangle$ be the triangle group where $q=p^k>2$ for some $k \in \N$ and  $r= p^{k+1}$ if $p=3$ or $r=p^k$ if $p\neq3$. Then the following hold:
	\begin{enumerate}
		\item The lower central quotient $T/\gamma_n(T)$ is a strongly real Beauville group for $n\geq 3$ if $p>3$, or $n\geq 4$ if $p=2,3$.
		\item The series $\{\gamma_n(T)\}_{n\geq 3}$ if $p>3$ and $\{\gamma_n(T)\}_{n\geq 4}$ if $p=2$ or $3$ can be refined to a normal series of $T$ such that two consecutive terms of the series have index $p$ and for every term $N$ of the series $T/N$ is a strongly real Beauville group.
	\end{enumerate} 
\end{thmA}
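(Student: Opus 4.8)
The plan is to prove both parts simultaneously by exhibiting, for every relevant quotient, an explicit pair of \emph{palindromic} generating pairs together with the single automorphism that realises strong reality. First I would record that the assignment $a\mapsto a^{-1}$, $b\mapsto b^{-1}$ respects the three defining relations of $T$ (here one uses that $ba=a^{-1}(ab)a$ is conjugate to $ab$, so $(a^{-1}b^{-1})^r=((ba)^{-1})^r=1$), hence extends to an automorphism $\theta$ of $T$; since each $\gamma_i(T)$ is characteristic, $\theta$ descends to every quotient $T/N$ occurring below. The key elementary observation is that a word $w$ in $a,b$ satisfies $\theta(w)=w^{-1}$ precisely when $w$ is a palindrome, because inverting every letter while keeping the order (what $\theta$ does) coincides with reversing the order and inverting every letter (what $w\mapsto w^{-1}$ does) exactly for palindromes. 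Consequently any two palindromic generators $\{x,y\}$ of $T/N$ automatically satisfy the strong reality condition with this $\theta$ and $g=1$, so the whole problem reduces to producing palindromic generating pairs that form a Beauville structure.

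Next I would set up the arithmetic of the Frattini quotient. Since $q=p^k$ and $r\in\{p^k,p^{k+1}\}$, the abelianisation satisfies $T^{\mathrm{ab}}\cong C_q\times C_q$, so for every $N\le\gamma_2(T)$ the Frattini quotient $(T/N)/\Phi(T/N)$ is canonically $\F_p^2$ with basis $\bar a,\bar b$; by the Burnside basis theorem two elements generate $T/N$ iff their images are $\F_p$-independent. The central reduction is the following: because $a^i\in\gamma_2(T)$ forces $q\mid i$ while $a$ has order exactly $q$ (and likewise for $b$, and for $ab$ when $r=q$), each of $\langle a\rangle,\langle b\rangle,\langle ab\rangle$ meets $\gamma_2(T)$ trivially, whence $\Sigma(a,b)\cap\gamma_2(T)=1$. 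Therefore any nontrivial element common to $\Sigma(a,b)$ and a second $\Sigma(x_2,y_2)$ must have nonzero image in $\F_p^2$ lying simultaneously on a line of the triple $\{\bar a,\bar b,\overline{ab}\}=\{(1,0),(0,1),(1,1)\}$ and on a line of the triple of $\{x_2,y_2\}$.

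For $p\ge5$ this finishes parts (i) and (ii) uniformly. Here $r=q$, so $a,b,ab$ all have order $q$ and the displayed reduction applies, and $\F_p^2$ has $p+1\ge6$ distinct lines. I would take the palindromic pair $\{aba,\,a^2ba^2\}$, whose images $(2,1),(4,1)$ are independent and whose triple determines the lines of $(2,1),(4,1),(6,2)$; a direct check shows these are distinct from $(1,0),(0,1),(1,1)$ for all $p\ge5$. Since this data lives entirely in $\F_p^2$ and the order computation for $a,b,ab$ is valid in \emph{every} quotient $T/N$ with $N\le\gamma_2(T)$, the same pair works for all $n\ge3$ and for every term $N$ of any index-$p$ refinement, so (i) and (ii) follow at once for $p\ge5$.

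The genuine difficulty is concentrated in $p=2,3$, and this is where I expect the main obstacle. When $p=3$ one has $r=3q$, so $(ab)^q\ne1$ lies in $\gamma_2(T)$ and the clean identity $\Sigma(a,b)\cap\gamma_2(T)=1$ fails; when $p=2$ there are only three lines in $\F_2^2$, all already used by $\{\bar a,\bar b,\overline{ab}\}$, so no second triple can avoid them and the distinct-line criterion is unavailable outright. In both cases I would need an explicit description of the lower central quotients $\gamma_i(T)/\gamma_{i+1}(T)$ together with the weights (the largest $i$ with $z\in\gamma_i(T)$) of the relevant powers, and then argue directly that the cyclic subgroups arising from a carefully chosen palindromic second pair intersect those of the first pair trivially up to conjugacy even when their Frattini images collide, pushing the comparison into $\gamma_2(T)/\gamma_3(T)$ and $\gamma_3(T)/\gamma_4(T)$. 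This forces the weaker bound $n\ge4$ and is the computational heart of the argument. Finally, for part (ii) I would observe that $\gamma_n(T)/\gamma_{n+1}(T)$ is central in $T/\gamma_{n+1}(T)$, hence a trivial $T$-module and a finite abelian $p$-group, so it admits a chain of $T$-invariant subgroups with successive index $p$; since every term $N$ of the resulting refinement satisfies $N\le\gamma_2(T)$, the Frattini data, the orders of $a,b,ab$, the palindrome--$\theta$ mechanism, and (for small $p$) the weight estimates all transfer verbatim, yielding a strongly real Beauville structure on each $T/N$.
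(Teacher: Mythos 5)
Your treatment of $p\ge 5$ is essentially sound, and it takes a more elementary route than the paper: where the paper computes a presentation of $T/\gamma_3(T)$ (Theorem \ref{special quotients}) and invokes the regular-$p$-group criterion of \cite{FG} (Lemma \ref{regular p-group}) to lift a Beauville structure from the Frattini quotient, you argue directly that a nontrivial element of $\Sigma(a,b)$ survives in the abelianisation and compare cyclic subgroups there; this works uniformly in every quotient $T/N$ with $N\le\gamma_2(T)$ and so gives (i) and (ii) at once for $p\ge5$. One imprecision needs repair: from $\Sigma(a,b)\cap\gamma_2(T)=1$ you may conclude that a nontrivial common element has nonzero image in $T/\gamma_2(T)\cong C_q\times C_q$, not in $\F_p^2$ (for $k>1$ the element $a^{p}$, say, lies in $\Sigma(a,b)$ and has trivial Frattini image), so the ``distinct lines'' criterion must be routed through the socles of the order-$q$ cyclic subgroups of $C_q\times C_q$; the statement is true and the fix is short, but as written the ``therefore'' is a non sequitur.

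The genuine gap is that for $p=2$ and $p=3$ you only describe what would have to be done --- ``argue directly that the cyclic subgroups \dots intersect \dots trivially up to conjugacy even when their Frattini images collide'' --- and do not do it. This is exactly where the paper's work lies: Theorem \ref{special quotients} computes explicit presentations of $T/\gamma_4(T)$ (class-$3$ groups of order $2^{5k-3}$, resp.\ $3^{5k}$), and Lemmas \ref{p=2} and \ref{p=3} carry out delicate computations of $p^{k-1}$-st powers and of conjugacy classes of the chosen symmetric words $(xy)^{n_i}x$ to verify the Beauville condition; moreover, for $p=3$ the exponent $r$ must be taken to be $3^{k+1}$ rather than $3^k$ precisely because $T/\gamma_4(T)$ with $r=3^k$ is provably \emph{not} a Beauville group (the paper's closing lemma), a phenomenon your sketch does not anticipate. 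Without these computations, parts (i) and (ii) are unproved for $p=2,3$. There is also a secondary gap in (ii): the refinement of $\{\gamma_n(T)\}$ must consist of $\theta$-invariant subgroups for $\theta$ to descend to $T/N$ and keep the structure strongly real, and an arbitrary chain of $T$-invariant subgroups of the central sections --- which is all you construct --- need not be $\theta$-invariant. The paper secures invariance by generating the intermediate terms with basic commutators, on which $\theta$ acts as $\pm1$ modulo the next term.
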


The reader should be aware that when the prime $p=3$, we work on the quotients of the triangle group $\langle a,b \mid a^{3^k}=b^{3^k}=(ab)^{3^{k+1}}=1 \rangle$ rather than  $\langle a,b \mid a^{3^k}=b^{3^k}=(ab)^{3^k}=1 \rangle$. The reason why we need this change is explained at the end of the paper.

\vspace{10pt}

\noindent
\textit{Notation.\/}
We use standard notation in group theory. If $G$ is a group, then we denote by $\Cl_G(x)$ the conjugacy class of the element $x\in G$. Also, if $p$ is a prime, then we write $G^{p^i}$ for the subgroup generated by all powers $g^{p^i}$ as $g$ runs over $G$. The exponent of a $p$-group $G$, denoted by $\exp G$, is the maximum of the orders of all elements of $G$.

\section{ Proof of the main theorem}

Let $m,n,r \in \N$ and let $T$ be the triangle group defined by the presentation
\[
T=\langle a,b \mid a^{p^m}=b^{p^n}=(ab)^{p^r}=1\rangle.
\]
Stix and Vdovina \cite[Theorem 2]{ SV} showed that if there is a Beauville $p$-group $G$ where the signature of one of the triples of the Beauville structure is $(p^m,p^n,p^r)$,  then this Beauville structure of $G$ is inherited by infinitely many quotients of the triangle group $T$.

Note that for any triangle group $T$, the map $\theta \colon T\longrightarrow T$ defined by $\theta(a)=a^{-1}$ and $\theta(b)=b^{-1}$ is an automorphism. Thus, quotients of the triangle group seem to be good candidates for strongly real Beauville $p$-groups.

In this section, we give the proof of Theorem A.  Let $T$ be the triangle group as in Theorem A.
In order to determine Beauville structures in quotients of $T$, our starting point will be to analyze the quotient group $T/\gamma_3(T)$ if $p>3$, or $T/\gamma_4(T)$ if $p=2$ or $3$. To this purpose, we need to know the presentation of these quotient groups, and we have the following theorem.

\begin{thm}
	\label{special quotients}
	Let $p$ be a prime and let $T=\langle  a,b \mid a^q=b^q=(ab)^r=1 \rangle$ be the triangle group where $q=p^k>2$ for some $k \in \N$ and  $r= p^{k+1}$ if $p=3$ or $r=p^k$ if $p\neq3$. Then the following hold:
	\begin{enumerate}
		\item
		If $p>3$ then $T/\gamma_3(T)\cong G$ where
	   \[
		G=
		\langle x,y,z \mid x^{p^k}=y^{p^k}=z^{p^k}=1, [y,x]=z\rangle,
		\]
		and $\exp G=p^k$.
		\item
		If $p=3$ then $T/\gamma_4(T)\cong G$ where
		\begin{multline*}
		G=
		\langle x,y,z,t,w \mid x^{3^k}=y^{3^k}=z^{3^k}=t^{3^k}=w^{3^k}=1,
		\\
		[y,x]=z, [z,x]=t, [z,y]=w \rangle,
		\end{multline*}
		and $\exp G=3^{k+1}$.
		\item
		If $p=2$ then $T/\gamma_4(T)\cong G$ where
		\begin{multline*}
		G=
		\langle x,y,z,t,w \mid x^{2^k}=y^{2^k}=z^{2^{k-1}}=t^{2^{k-1}}=w^{2^{k-1}}=1,
		\\
		[y,x]=z, [z,x]=t, [z,y]=w \rangle,
		\end{multline*}
		and $\exp G=2^k$.
	\end{enumerate} 
	\end{thm}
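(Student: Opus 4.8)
The plan is to realize each $G$ as the largest quotient of $T$ of nilpotency class $2$ (when $p>3$) or class $3$ (when $p\in\{2,3\}$), by working inside the free nilpotent group of that class on the two generators $x,y$ and imposing the images of the defining relators together with their normal closures. First I would record the abelianization: reading the relators modulo $\gamma_2(T)$ shows that $(ab)^r$ becomes $a^rb^r$, which is a consequence of $a^q=b^q=1$ in every case (since $r$ is a multiple of $q$), so $T/\gamma_2(T)\cong C_{p^k}\times C_{p^k}$ and in particular $x,y$ have order exactly $p^k$. I then fix the basic commutator $z=[y,x]$ of weight $2$ and, for the class-$3$ cases, $t=[z,x]$ and $w=[z,y]$ of weight $3$; in the relevant free nilpotent group every element is written uniquely as $x^ay^bz^c$ (resp.\ $x^ay^bz^ct^dw^e$), with $\gamma_2/\gamma_3$ cyclic on $z$ and $\gamma_3/\gamma_4$ free abelian on $t,w$.

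The main computational tool is the collection (Hall--Petrescu) formula. Expanding the relator $(ab)^r$ modulo $\gamma_3$ (resp.\ $\gamma_4$) gives $(xy)^r=x^ry^rz^{\binom r2}$ (resp.\ $(xy)^r=x^ry^rz^{\binom r2}t^{\binom r3}w^{\binom r3+\binom{r+1}3}$), which I would verify by induction on $r$ using the collection identities $yx=xyz$, $zx=xzt$, $zy=yzw$. In parallel, commuting the power relators with the generators produces the relations carried by their normal closures: from $a^q=1$ one obtains $[x^{p^k},y]=z^{-p^k}t^{-\binom{p^k}2}=1$ (with the $t$-term absent in the class-$2$ case) and, iterating the commutator with $x$ and $y$, the relations $t^{p^k}=w^{p^k}=1$, and symmetrically from $b^q=1$; commuting the relator $(ab)^r$ with $x$ and $y$ yields $t^{\binom r2}=w^{\binom r2}=1$. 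Reducing these against $x^{p^k}=y^{p^k}=1$ turns them into the power relations on $z,t,w$ displayed in the presentation of $G$.

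The heart of the argument is the bookkeeping of the $p$-adic valuations of the binomial coefficients $\binom r2,\binom r3,\binom{r+1}3$, and this is exactly what separates the three cases. When $p>3$ and $r=p^k$, the coefficient $\binom{p^k}2$ has valuation $k$, so $z^{\binom{p^k}2}$ is a consequence of $z^{p^k}=1$ and the relator $(ab)^r$ imposes nothing new, giving the Heisenberg group of exponent $p^k$. When $p=3$ and $r=3^{k+1}$, the valuation of $\binom{3^{k+1}}2$ and of the two cubic coefficients is at least $k$, so the relator reduces nothing below $3^k$ and $z,t,w$ keep order $3^k$; the extra exponent is detected on $xy$, since $(xy)^{3^k}=t^{\binom{3^k}3}w^{\binom{3^k}3+\binom{3^k+1}3}$ has both cubic exponents of valuation exactly $k-1$, so $xy$ has order $3^{k+1}$ and $\exp G=3^{k+1}$. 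When $p=2$ the factor of $2$ in $\binom{2^k}2=2^{k-1}(2^k-1)$ and the valuations of the cubic coefficients force, through the normal closure of $(ab)^r$, the stronger relations $t^{2^{k-1}}=w^{2^{k-1}}=1$ and hence $z^{2^{k-1}}=1$, while the same count makes $(xy)^{2^k}=1$, so $\exp G=2^k$.

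Finally I would close the isomorphism from both sides: the relations derived above show that $T/\gamma_n(T)$ is a quotient of $G$, and conversely a direct check that $x^q=y^q=1$ and $(xy)^r=1$ hold in the explicitly presented $G$ (using the same collection formula and valuation counts) shows $G$ is a quotient of $T/\gamma_n(T)$, whence equality; the exponent is then read off from the element of maximal order exhibited above. I expect the main obstacle to be precisely this valuation bookkeeping for the weight-$3$ coefficients, and in particular verifying that for $p=3$ the special choice $r=3^{k+1}$ is exactly what keeps $t,w$ of full order $3^k$ (so that $\exp G=3^{k+1}$), whereas the naive choice $r=3^k$ would collapse them; the prime $2$ is the most delicate, since there the extra $2$-divisibility genuinely lowers the orders of $z,t,w$ to $2^{k-1}$.
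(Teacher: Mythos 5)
Your overall strategy --- computing $T/\gamma_3(T)$ (resp.\ $T/\gamma_4(T)$) directly as the quotient of the free nilpotent group of class $2$ (resp.\ $3$) by the normal closure of the three relators, with the collection formula and the $p$-adic valuations of $\binom{r}{2}$, $\binom{r}{3}$ carrying the case distinction --- is sound and genuinely different from the paper's. The paper instead constructs $G$ explicitly as an iterated semidirect product (which certifies $|G|$ and the exact orders of $z,t,w$ without any normal-closure bookkeeping), maps $T/\gamma_n(T)$ onto it, and matches this lower bound against an upper bound read off from the lower central factors of $T$; for $p=2$ the single congruence $1=(ab)^{2^k}\equiv[b,a]^{\binom{2^k}{2}}\pmod{\gamma_3(T)}$ is what forces the factors $\gamma_2(T)/\gamma_3(T)$ and $\gamma_3(T)/\gamma_4(T)$ to have exponent at most $2^{k-1}$. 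Your route is self-contained but must justify that the consequences you list exhaust the normal closure (equivalently, that the presentation of $G$ does not collapse further); this is exactly what the semidirect-product model gives for free, and it is needed later, since the lemmas on the $p=2$ and $p=3$ cases use that $t,w$ have order \emph{exactly} $2^{k-1}$, resp.\ $3^k$. (Your power formula agrees with the paper's: $\binom{n}{3}+\binom{n+1}{3}=\tfrac{(n-1)n(2n-1)}{6}$.)

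The one genuine gap is the exponent claim. Showing $(xy)^{2^k}=1$, or exhibiting $xy$ of order $3^{k+1}$ when $p=3$, only bounds $\exp G$ from one side; ``reading off the exponent from the element of maximal order exhibited'' does not prove that \emph{every} element of $G$ has order dividing $p^k$ (resp.\ $3^{k+1}$) --- a priori some word other than $x$, $y$, $xy$ could have larger order in a group of class $3$. For $p>3$ this upper bound is regularity of a class-$2$ $p$-group; for $p=2,3$ the paper applies the Hall--Petrescu formula to an \emph{arbitrary} pair $g,h\in G$, using $\exp G'=2^{k-1}$ to kill $c_2^{\binom{2^k}{2}}$ and $c_3^{\binom{2^k}{3}}$, so that $g\mapsto g^{2^k}$ annihilates a generating set and hence all of $G$ (and analogously for $p=3$). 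This step is not optional: $\exp G=p^k$ (resp.\ $3^{k+1}$) is used in the Beauville-structure arguments that follow, so you should supply the Hall--Petrescu (or regularity) argument for general elements rather than for the distinguished triple only.
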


\begin{proof}
	We first prove (i). Let $p>3$. Then the group $G$ given in (i) is the semidirect product of $\langle y\rangle \times \langle z\rangle\cong C_{p^k}\times C_{p^k}$ by $\langle x\rangle \cong C_{p^k}$. Since $G$ is of class $2$, for any $n\in \N$ we have $(xy)^{p^n}=x^{p^n}y^{p^n}[y,x]^{\binom{p^n}{2}}$. Hence $o(xy)=p^k$. 
	
	Let $u$ and $v$ be the images of $a$ and $b$ in $T/\gamma_3(T)$, respectively. Since $o(x)=o(y)=o(xy)=p^k$ and  $\gamma_3(G)=1$, the map
	$\alpha \colon T/\gamma_3(T) \longrightarrow G$ sending $u$ to $x$ and $v$ to $y$ is well-defined and an epimorphism. Thus $|T/\gamma_3(T)|\geq |G|=p^{3k}$. On the other hand, since $\gamma_2(T)/\gamma_3(T)$ is cyclic of exponent at most $p^k$, it then follows that $|T/\gamma_3(T)|\leq p^{3k}$, and hence $G\cong T/\gamma_3(T)$.
	
	We now prove (ii) and (iii) simultaneously. Let us  call $A= \langle z\rangle \times \langle t\rangle \times \langle w\rangle\cong C_s\times C_s\times C_s$, where $s=3^k$ for $k\geq 1$ or $2^{k-1}$ for $k\geq 2$. Then the group $G$ can be constructed as the semidirect product of $ \langle y \rangle \ltimes A$ by $\langle x \rangle\cong C_{p^k}$, where $\langle y \rangle\cong C_{p^k}$ and $p=2$ or $3$. Then $G$ is of class $3$. We next show that $o(xy)=p^{k}$ or $p^{k+1}$ according as $p=2$ or $3$.  First of all, one can show that for any $i\in \N$, we have that $[y,x^i]=z^it^{\binom{i}{2}}$. If we call $A=\sum_{i=1}^{n-1} \binom{i}{2}$ and $B=\sum_{i=1}^{n-1} i^2$, then for any $n\in \N$ we have
	\begin{equation*}
	\begin{split} 
	(xy)^n & =x^ny^{x^{n-1}}y^{x^{n-2}}\dots y^xy 
	= x^nyz^{n-1}yz^{n-2}\dots zy t^{A}\\
	&=x^ny^n(z^{n-1})^{y^{n-1}}\dots z^y t^{A}
	=x^ny^nz^{\binom{n}{2}} t^{A}w^{B}.
	\end{split}
	\end{equation*}
	Note that $A=\binom{n}{3}$ and $B=\frac{(n-1)n(2n-1)}{6}$.
	It then follows that $o(xy)=2^k$ if $p=2$, or $o(xy)=3^{k+1}$ if $p=3$.
	
	As before, we have an epimorphism from $T/\gamma_4(T)$ to $G$. Thus $|T/\gamma_4(T)|\geq |G|=2^{5k-3}$ or $3^{5k}$. On the other hand, if $p=3$ then  since $\gamma_2(T)/\gamma_3(T)$ is cyclic and $\gamma_3(T)/\gamma_4(T)$ is a $2$-generator group, and both are of exponent at most $3^k$,  it then follows that $|T/\gamma_4(T)|\leq 3^{5k}$. Hence $G\cong T/\gamma_4(T)$. 
	If $p=2$ then
	\begin{equation*}
	\begin{split}
	1 &=(ab)^{2^k}\equiv  a^{2^k}b^{2^k}[b,a]^{\binom{2^k}{2}}
	\equiv [b,a]^{\binom{2^k}{2}} \pmod{\gamma_3(T)}.
	\end{split}
	\end{equation*}
	Thus $[b,a]$ is of order at most $2^{k-1}$ modulo $\gamma_3(T)$. It then follows that $ \gamma_2(T)/\gamma_3(T)$ and $\gamma_3(T)/\gamma_4(T)$ are of exponent $\leq 2^{k-1}$. This implies that  $|T/\gamma_4(T)|\leq 2^{5k-3}$, and hence $T/\gamma_4(T)\cong G$, as desired.
	
	Now we show that $\exp G=p^k$ if $p\neq 3$. If $p>3$ then $G$ is regular and hence $\exp G =p^k$. If $p=2$ and $g,h\in G$, then by the Hall-Petrescu formula (see \cite[III.9.4]{hup}), we have
	\[
	(gh)^{2^k}=g^{2^k}h^{2^k}c_2^{\binom{2^k}{2}}c_3^{\binom{2^k}{3}},
	\]
	where $c_i \in \gamma_i(\langle g,h \rangle)$. Since $\exp G'=2^{k-1}$, it follows that $(gh)^{2^k}=g^{2^k}h^{2^k}$. Since $G$ is generated by two elements of order $2^k$, we conclude that $\exp G=2^k$.
	A similar calculation shows that if $p=3$ then $\exp G=3^{k+1}$.
\end{proof}

In order to prove Theorem A, our first aim will be to show that the quotients given in Theorem \ref{special quotients} are strongly real Beauville groups. We deal separately with the cases $p>3$, $p=3$ and $p=2$.

We start with the case $p>3$. Note that in this case, the quotient group $T/\gamma_3(T)$ is of class $2<p$, and thus it is a regular $p$-group. 
The following result gives a necessary and sufficient condition for a regular $p$-group to be a Beauville group.

\begin{lem}\textup{\cite[Corollary~2.6]{FG} }
	\label{regular p-group}
	Let $G$ be a finite $2$-generator regular $p$-group. Then $G$ is a Beauville group if and only if $p\geq5$ and $|G^{p^{e-1}}|\geq p^2$, where $\exp G=p^e$. If that is the case, then every lift to $G$ of a Beauville structure of $G/\Phi(G)$ is a Beauville
	structure of $G$.
\end{lem}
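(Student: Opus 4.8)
My approach is to restate the Beauville condition as a statement about \emph{minimal} (order-$p$) subgroups and then to analyse these through the top power map of the regular $p$-group $G$. Throughout write $\exp G=p^e$ and identify $G/\Phi(G)$ with $\F_p^2$, whose nontrivial cyclic subgroups are its $p+1$ lines. The first step is elementary: in a $p$-group every nontrivial cyclic subgroup $\langle w\rangle$ of order $p^j$ has a unique subgroup of order $p$, namely $\langle w^{p^{j-1}}\rangle$, and two cyclic $p$-subgroups intersect nontrivially precisely when these minimal subgroups agree. Consequently $\Sigma(x_1,y_1)\cap\Sigma(x_2,y_2)=1$ holds if and only if, for all $w_1\in\{x_1,y_1,x_1y_1\}$ and $w_2\in\{x_2,y_2,x_2y_2\}$, the minimal subgroups of $\langle w_1\rangle$ and $\langle w_2\rangle$ are not $G$-conjugate. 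Thus the problem reduces to controlling the conjugacy classes of the six minimal subgroups attached to the two triples.

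The central tool is the map $w\mapsto w^{p^{e-1}}$. P.\ Hall's theory of regular $p$-groups gives that $G^{p^{e-1}}=\mho_{e-1}(G)$ is an elementary abelian subgroup, that $w^{p^{e-1}}\neq1$ exactly when $w$ has maximal order $p^e$, and that $G$-conjugation fixes the image of $w^{p^{e-1}}$ in the associated graded quotient $G^{p^{e-1}}/\mho_{e-1}(\Phi(G))$, since conjugating $w$ only alters it by a commutator. The ``leading term'' of $w^{p^{e-1}}$ in this quotient depends only on $\bar w\in\F_p^2$ and defines a homomorphism $\tilde\pi\colon\F_p^2\to G^{p^{e-1}}/\mho_{e-1}(\Phi(G))$. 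The minimal subgroup of a maximal-order element $w$ is $\langle w^{p^{e-1}}\rangle$, and by the conjugation-invariance just noted, the minimal subgroups of two maximal-order elements can be conjugate only if the elements have the same $\tilde\pi$-image, i.e.\ the same line in $\F_p^2$. The key point, whose proof is the technical heart, is that $|G^{p^{e-1}}|\geq p^2$ is equivalent, for a $2$-generator regular $p$-group, to $\tilde\pi$ being injective; when that holds, every element outside $\Phi(G)$ has maximal order and elements on distinct lines have non-conjugate minimal subgroups.

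For necessity I would treat the two ways the hypotheses can fail. If $|G^{p^{e-1}}|=p$, then all elements of maximal order share the single minimal subgroup $G^{p^{e-1}}$; since $\Omega_{e-1}(G)$ is a proper subgroup it cannot contain both members of a generating pair, so each triple contains a maximal-order element, and the two such minimal subgroups coincide, ruling out a Beauville structure. If $p\leq3$, then $\F_p^2$ has at most four lines while each triple already spans three distinct lines, so by counting the two triples must share a line; invoking the injectivity of $\tilde\pi$ (in the remaining case $|G^{p^{e-1}}|\geq p^2$, where all triple members have maximal order) this shared line yields conjugate minimal subgroups, again obstructing any Beauville structure. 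Hence a Beauville group must satisfy both $p\geq5$ and $|G^{p^{e-1}}|\geq p^2$.

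For sufficiency, assume $p\geq5$ and $|G^{p^{e-1}}|\geq p^2$. Since $p+1\geq6$, the plane $\F_p^2$ admits a Beauville structure, i.e.\ two triples of pairwise distinct lines; lift it to generating pairs $\{x_1,y_1\}$ and $\{x_2,y_2\}$ of $G$, which generate by the Burnside basis theorem. As $\tilde\pi$ is injective, every member $w$ of either triple lies outside $\Phi(G)$ and so has maximal order, whence its minimal subgroup sits inside $G^{p^{e-1}}$; two such minimal subgroups from opposite triples lie on distinct lines and are therefore non-conjugate. By the first paragraph this is exactly the Beauville condition, and since the argument used only that the six lines are distinct, \emph{every} lift works. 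The main obstacle is the equivalence highlighted in the second paragraph: proving that $|G^{p^{e-1}}|\geq p^2$ forces the leading-term map $\tilde\pi$ to have rank $2$. This is where regularity is indispensable and must be established via P.\ Hall's $\mho$/$\Omega$ calculus, controlling both the possibly non-additive deep corrections of the power map and the action of conjugation on $G^{p^{e-1}}$.
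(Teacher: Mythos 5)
First, a remark on the comparison itself: the paper does not prove this lemma --- it is quoted verbatim from \cite[Corollary~2.6]{FG} --- so there is no in-paper argument to measure yours against. Your overall strategy (reduce the Beauville condition to non-conjugacy of the order-$p$ socles of the cyclic subgroups, then control those socles via the $p^{e-1}$-th power map and the induced map $\tilde\pi$ on $G/\Phi(G)$) is essentially the route taken in \cite{FG}, and the skeleton is sound: the reduction in your first paragraph is correct, the case $|G^{p^{e-1}}|=p$ of necessity is handled correctly, and the line-counting for $p\leq 3$ is the right idea.

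However, there are genuine gaps. The central one is that you never prove the equivalence on which both directions rest, namely that $|G^{p^{e-1}}|\geq p^2$ forces $\tilde\pi$ to be injective; you explicitly defer it (``must be established via P.\ Hall's $\mho/\Omega$ calculus''). This is the technical core of the result, not a routine verification. It is true, but the proof requires real input from regularity: one shows that injectivity of $\tilde\pi$ is equivalent to $\Omega_{e-1}(G)\leq\Phi(G)$ (using $|G:\Omega_i(G)|=|\mho_i(G)|$ and $\mho_{e-1}(\Phi(G))\leq G^{p^{e-1}}$), that this containment holds automatically when $\exp G'=p^e$ (via $[x,y]^{p^{e-1}}\neq 1\Rightarrow [x^{p^{e-1}},y]\neq 1\Rightarrow o(x)=p^e$), and that when $\exp G'\leq p^{e-1}$ and some element of order at most $p^{e-1}$ lies outside $\Phi(G)$ one in fact gets $|G^{p^{e-1}}|=p$. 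Without this, neither sufficiency nor the $p\leq 3$ branch of necessity is complete. A second gap is in that $p\leq 3$ branch: from a shared line you conclude ``conjugate minimal subgroups,'' but your setup only yields that $w_1^{p^{e-1}}$ and a suitable power of $w_2^{p^{e-1}}$ are congruent modulo $\mho_{e-1}(\Phi(G))$, whereas the conjugacy class of $w_2^{p^{e-1}}$ is contained in $w_2^{p^{e-1}}[\,\mho_{e-1}(G),G]$, which need not be all of $w_2^{p^{e-1}}\mho_{e-1}(\Phi(G))$; when $\exp G'=p^e$ (which does occur for regular $3$-groups, e.g.\ the free $2$-generator group of class $2$ and exponent $3^e$) the asserted collision therefore needs an additional argument. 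Finally, a minor point: regularity gives that $G^{p^{e-1}}=\mho_{e-1}(G)$ has exponent $p$, but your claim that it is elementary \emph{abelian} is not justified as stated; fortunately only the exponent is used.
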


\begin{lem}
	\label{p>3 first quotient}
	Let $p>3$, $k\geq 1$ and let
	\[
	G=
	\langle x,y,z \mid x^{p^k}=y^{p^k}=z^{p^k}=1, [y,x]=z\rangle.
	\]
Then $G$ is a strongly real Beauville group. More precisely, if $w_1$ and $w_2$ are any two symmetric words in $x$ and $y$ such that no two of the elements in the set $\{ x,y,xy,w_1,w_2,w_1w_2\}$ lie in the same maximal subgroup, then $\{x,y\}$ and $\{w_1, w_2\}$ form a strongly real Beauville structure for $G$.
\end{lem}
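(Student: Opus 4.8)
The plan is to verify the two Beauville conditions by passing to the Frattini quotient through Lemma~\ref{regular p-group}, and then to upgrade the structure to a strongly real one by exhibiting a single automorphism inverting the generators, reducing the second pair to the palindromic nature of $w_1,w_2$. First I would record that, since $z=[y,x]$ is central and $\gamma_3(G)=1$, the group $G$ has nilpotency class $2<p$ and is therefore regular, with $\exp G=p^k$ by Theorem~\ref{special quotients}(i). To apply Lemma~\ref{regular p-group} I must check $p\ge 5$ (immediate from $p>3$) and $|G^{p^{k-1}}|\ge p^2$, where $e=k$. For the latter I would project onto $G/\langle z\rangle\cong C_{p^k}\times C_{p^k}$, where the images of $x^{p^{k-1}}$ and $y^{p^{k-1}}$ generate a subgroup of order $p^2$; hence $\langle x^{p^{k-1}},y^{p^{k-1}}\rangle\le G^{p^{k-1}}$ already has order at least $p^2$. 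Lemma~\ref{regular p-group} then yields that $G$ is a Beauville group and, crucially, that any lift to $G$ of a Beauville structure of $G/\Phi(G)$ is itself a Beauville structure.

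Next I would analyse $G/\Phi(G)$. Since $G'=\langle z\rangle$ and $\Phi(G)=G'G^p=\langle x^p,y^p,z\rangle$, the Frattini quotient is elementary abelian of rank $2$, that is $G/\Phi(G)\cong C_p\times C_p$, with the images of $x$ and $y$ as a basis. Its maximal subgroups are precisely the $p+1$ one-dimensional subspaces, so two elements of $G$ lie in a common maximal subgroup exactly when their images in $G/\Phi(G)$ are proportional. The hypothesis that no two of $\{x,y,xy,w_1,w_2,w_1w_2\}$ share a maximal subgroup therefore says that the six images occupy six distinct lines; in particular $\{\bar x,\bar y\}$ and $\{\bar w_1,\bar w_2\}$ are generating pairs (each pair spans, hence $w_1,w_2$ generate $G$ by the Burnside basis theorem), and the line-triples of the two pairs are pairwise disjoint off the origin. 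This is exactly a Beauville structure for $C_p\times C_p$ (the inequality $6\le p+1$ forcing $p\ge 5$, matching Catanese's condition), so by Lemma~\ref{regular p-group} the lifts $\{x,y\}$ and $\{w_1,w_2\}$ form a Beauville structure for $G$.

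Finally I would establish strong reality with $g_1=g_2=1$ using the involution $\theta\colon x\mapsto x^{-1},\,y\mapsto y^{-1}$. To see that $\theta$ is a well-defined automorphism I would check that it respects the defining relations: the power relations are clear, while $\theta([y,x])=[y^{-1},x^{-1}]=[y,x]=z$ because $G$ has class $2$, so $\theta$ fixes $z$ and is an involution of $G$. For the first pair $\theta(x)=x^{-1}$ and $\theta(y)=y^{-1}$ by construction. For the second pair the key point is that a symmetric (palindromic) word $w=a_1a_2\cdots a_n$ in $x,y$ satisfies $\theta(w)=a_1^{-1}\cdots a_n^{-1}=a_n^{-1}\cdots a_1^{-1}=w^{-1}$, where the middle equality is the palindrome condition $a_i=a_{n+1-i}$. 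Hence $\theta(w_i)=w_i^{-1}$ for $i=1,2$, and the Beauville structure is strongly real.

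I expect the only genuinely delicate points to be the verification that $\theta$ is well defined---where class~$2$ is exactly what makes $\theta$ \emph{fix} the commutator $z$ rather than invert it---together with the clean translation of the ``same maximal subgroup'' hypothesis into the disjointness of line-triples in $C_p\times C_p$. The order estimate $|G^{p^{k-1}}|\ge p^2$ and the palindrome identity $\theta(w)=w^{-1}$ are then routine.
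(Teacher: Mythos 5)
Your proposal is correct and follows essentially the same route as the paper: regularity of the class-$2$ group $G$ plus Lemma~\ref{regular p-group} to lift the obvious Beauville structure of $G/\Phi(G)\cong C_p\times C_p$ (where the hypothesis is exactly that the six elements give six distinct lines), and then the inverting automorphism together with the palindrome identity $\theta(w)=w^{-1}$ for strong reality. The only cosmetic difference is that you verify $\theta$ directly on the presentation of $G$ (using class $2$ to see $[y^{-1},x^{-1}]=z$), whereas the paper induces it from the automorphism $a\mapsto a^{-1}$, $b\mapsto b^{-1}$ of the triangle group $T$ via $G\cong T/\gamma_3(T)$; both are valid.
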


\begin{proof}
	First of all, observe that since $G$ is regular, we have $\exp G=p^k$ (see \cite[Theorem 3.14]{suz}). Also $|G^{p^{k-1}}|\geq p^3$. It then follows from Lemma \ref{regular p-group} that $G$ is a Beauville group.
	
	 Since $p\geq 5$,  $G/\Phi(G)$ is a Beauville group with the Beauville structure  $\{x\Phi(G), y\Phi(G)\}$ and $\{w_1\Phi(G),$ $w_2\Phi(G)\}$. According to Lemma \ref{regular p-group}, this Beauville structure is inherited by $G$, and hence $\{x,y\}$ and $\{w_1,w_2\}$ form a Beauville structure for $G$. 
	
	We next show that this Beauville structure is strongly real. The map $\theta \colon T\longrightarrow T$ defined by $\theta(a)=a^{-1}$ and $\theta(b)=b^{-1}$ is an automorphism. Since $G\cong T/\gamma_3(T)$, $\theta$ induces an automorphism $\phi \colon G \longrightarrow G$ defined by $\phi(x)=x^{-1}$ and $\phi(y)=y^{-1}$. Also note that since $w_1$ and $w_2$ are symmetric words in $x$ and $y$, we have $\phi(w_1)=w_1^{-1}$ and $\phi(w_2)=w_2^{-1}$. Hence $G$ is a strongly real Beauville group.
\end{proof}

Notice that it is always possible to choose two symmetric words $w_1$ and $w_2$ such that each element in the set 
$\{ x,y,xy,w_1,w_2,w_1w_2\}$ falls into a different maximal subgroup. For example, we can take $w_1=(xy)^{n_1}x$ and $w_2=(xy)^{n_2}x$ where $n_1 \equiv 1 \pmod{p}$ and $n_2\equiv 3 \pmod{p}$.

\vspace{5pt}
We next deal with the cases $p=2$ or $3$. To this purpose, we also need the following easy lemma.

\begin{lem}
	\label{orders preserve}
	Let $G=\langle a,b \rangle$ be a $2$-generator $p$-group and suppose that $G/G'=\langle aG'\rangle \times \langle bG'\rangle$. If $o(a)=o(aG')$ then
	\[
	\Big(\bigcup_{g\in G} {\langle a\rangle}^g  \Big)
	\bigcap
	\Big(\bigcup_{g\in G} {\langle b\rangle}^g \Big)= 1.
	\]
\end{lem}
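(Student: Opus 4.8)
The plan is to reduce the statement to a single algebraic fact, namely that the hypothesis $o(a)=o(aG')$ is equivalent to $\langle a\rangle\cap G'=1$, and then to pass to the abelianization, where conjugation disappears. First I would unwind the hypothesis. The order of $aG'$ in $G/G'$ is the least positive integer $m$ with $a^m\in G'$, so $\langle a\rangle\cap G'$ is generated by $a^{o(aG')}$. Since $o(a)=o(aG')$ forces $a^{o(aG')}=a^{o(a)}=1$, no power $a^i$ with $0<i<o(a)$ lands in $G'$, and therefore $\langle a\rangle\cap G'=1$.

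Next I would argue by contradiction. Suppose some $t\neq 1$ lies in the intersection on the left-hand side. Then $t=g^{-1}a^ig$ for some $g\in G$ and some integer $i$, and simultaneously $t=h^{-1}b^jh$ for some $h\in G$ and some integer $j$. Projecting onto $G/G'$ and using that inner automorphisms act trivially on the abelianization, I obtain $a^iG'=tG'=b^jG'$ in $G/G'$.

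Now I would invoke the direct product decomposition $G/G'=\langle aG'\rangle\times\langle bG'\rangle$. The equality $a^iG'=b^jG'$ says $(aG')^i(bG')^{-j}=G'$, and in a direct product of the two cyclic factors this forces each factor to be trivial; in particular $(aG')^i=G'$, that is, $a^i\in G'$. Combining this with $a^i\in\langle a\rangle$ and the identity $\langle a\rangle\cap G'=1$ from the first step yields $a^i=1$, whence $t=g^{-1}a^ig=1$, contradicting $t\neq 1$.

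The argument is short and I do not anticipate a serious obstacle; the only point requiring care is the translation of the order hypothesis into the intersection condition $\langle a\rangle\cap G'=1$, since it is exactly this that lets the projection argument collapse $t$ to the identity. I would also remark that the hypothesis is used only on the factor $a$, which suffices precisely because the projected element is forced to lie in the cyclic summand $\langle aG'\rangle$ as well as in $\langle bG'\rangle$, so the direct product structure does the rest.
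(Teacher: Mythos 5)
Your argument is correct and is essentially the same as the paper's: both project a putative common element to the abelianization, use the direct product decomposition $G/G'=\langle aG'\rangle\times\langle bG'\rangle$ to force it into $G'$, and then use the hypothesis $o(a)=o(aG')$ (equivalently $\langle a\rangle\cap G'=1$) to conclude it is trivial. No issues.
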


\begin{proof}
	Let $x=(a^i)^g=(b^j)^h$ for some $g,h\in G$ and $i,j \in \Z$. Then in the quotient group $\overline{G}= G/G'=\langle \overline{a}\rangle\times \langle \overline{b}\rangle$, we have $\overline{x}= \overline{1}$, and thus $x\in G'\cap \langle a^g\rangle$. Since $o(a)=o(aG')$, we have $G'\cap \langle a^g\rangle=1$, and hence $x=1$, as desired.
\end{proof}

In the calculations in Lemmas \ref{p=2} and $\ref{p=3}$, we use the following identities.
Let $G$ be a group and let $x,y \in G$.
\begin{enumerate}
	\item 
	If $G'$ is abelian, then
	\[
	[x,y^n]= [x,y]^n[x,y,y]^{\binom{n}{2}} \dots [x,y,\overset{n}{\ldots},y]^{\binom{n}{n}}.
	\]
	\item
	If $\langle y, G'\rangle$ is abelian, then
	\[
	(xy)^n=x^ny^n[y,x]^{\binom{n}{2}}\dots [y,x,\overset{n-1}{\ldots},x]^{\binom{n}{n}}.
	\]
\end{enumerate}

\begin{lem}
	\label{p=2}
	Let $k>1$ and let \begin{multline*}
	G=
	\langle x,y,z,t,w \mid x^{2^k}=y^{2^k}=z^{2^{k-1}}=t^{2^{k-1}}=w^{2^{k-1}}=1, [y,x]=z,
	\\
	[z,x]=t, [z,y]=w \rangle.
	\end{multline*}
	Then $G$ is a strongly real Beauville group. More precisely, if $w_1=(xy)^{n_1}x$ and $w_2=(xy)^{n_2}x$ for $n_1, n_2 \in \Z^+$ such that $n_1 \equiv 1 \pmod{4}$ and $n_2\equiv 2 \pmod{4}$, then $\{x,y\}$ and $\{w_1, w_2\}$ form a strongly real Beauville structure for $G$.
\end{lem}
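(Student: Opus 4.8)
The plan is to check the three ingredients of a strongly real Beauville structure separately, disposing of the easy ones first. Since $G\cong T/\gamma_4(T)$ by Theorem~\ref{special quotients}, the inverting automorphism $\theta$ of $T$ descends to an automorphism $\phi$ of $G$ with $\phi(x)=x^{-1}$ and $\phi(y)=y^{-1}$; as $w_1=(xy)^{n_1}x$ and $w_2=(xy)^{n_2}x$ are palindromic words in $x$ and $y$, one has $\phi(w_i)=w_i^{-1}$, so with $g_1=g_2=1$ any Beauville structure of the form $\{\{x,y\},\{w_1,w_2\}\}$ is automatically strongly real. The pair $\{x,y\}$ generates $G$ by definition, and since $n_1$ is odd and $n_2$ is even the images $\overline{w_1},\overline{w_2}$ form a basis of $G/\Phi(G)\cong C_2\times C_2$, so $\{w_1,w_2\}$ generates $G$ by the Burnside basis theorem. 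Everything therefore reduces to the Beauville condition $\Sigma(x,y)\cap\Sigma(w_1,w_2)=1$, i.e.\ to showing that for every $u\in\{x,y,xy\}$ and $v\in\{w_1,w_2,w_1w_2\}$ no nontrivial power of $u$ is conjugate to a power of $v$.

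Using identities (i) and (ii) I would first verify that $o(xy)=o(w_1)=o(w_2)=o(w_1w_2)=2^k$, so both triples have signature $(2^k,2^k,2^k)$ and every element of the two triples has order equal to that of its image in $G/G'\cong C_{2^k}\times C_{2^k}$. Because conjugation is trivial modulo $G'$, a relation $u^i\sim v^j$ can occur only when $\overline{u^i}=\overline{v^j}$ in $G/G'$; computing modulo $\Phi(G)$ gives $\overline{w_1}=\overline{y}$, $\overline{w_2}=\overline{x}$ and $\overline{w_1w_2}=\overline{xy}$. Thus the nine pairs $(u,v)$ fall into six \emph{transverse} pairs, where $\overline u$ and $\overline v$ are independent modulo $\Phi(G)$, and the three \emph{parallel} pairs $(x,w_2)$, $(y,w_1)$, $(xy,w_1w_2)$, where they coincide.

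For a transverse pair $(u,v)$, independence modulo $\Phi(G)$ makes the matrix expressing $uG',vG'$ in the basis $\{xG',yG'\}$ of $G/G'$ have odd determinant, so $\{uG',vG'\}$ is itself a basis of $G/G'\cong C_{2^k}\times C_{2^k}$; together with $o(u)=o(uG')=2^k$ this puts us exactly in the situation of Lemma~\ref{orders preserve}, which yields $\big(\bigcup_{g}\langle u\rangle^g\big)\cap\big(\bigcup_{g}\langle v\rangle^g\big)=1$. This disposes of all six transverse pairs at once, so the only possible overlaps of $\Sigma(x,y)$ and $\Sigma(w_1,w_2)$ come from the three parallel pairs.

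The parallel pairs are the crux, and I expect them to be the main obstacle. Take $(x,w_2)$: if $x^i\sim w_2^j$ then $\overline{x^i}=\overline{w_2^j}$ in $G/G'$, and writing $n_2=2m$ with $m$ odd the equation $jn_2\equiv0\pmod{2^k}$ forces $j\equiv0\pmod{2^{k-1}}$; since $o(w_2)=2^k$ the only nontrivial case is $i=j=2^{k-1}$, leaving the single candidate coincidence $x^{2^{k-1}}\sim w_2^{2^{k-1}}$. A computation in the class-$2$ quotient $G/\gamma_3(G)$ shows that these two involutions are \emph{equal} there, so the whole obstruction sits in the central subgroup $\gamma_3(G)=\langle t,w\rangle$: one writes $w_2^{2^{k-1}}=x^{2^{k-1}}\eta$ with $\eta\in\gamma_3(G)$, and the refined hypothesis $n_2\equiv2\pmod4$ (rather than merely $n_2$ even) is precisely what is needed to pin down $\eta$ from the full class-$3$ expansion of $\big((xy)^{n_2}x\big)^{2^{k-1}}$. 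Now $x^{2^{k-1}}$ is central modulo $\gamma_3(G)$, so $g\mapsto[x^{2^{k-1}},g]$ is a homomorphism into $\gamma_3(G)$ and the conjugacy class of $x^{2^{k-1}}$ equals $x^{2^{k-1}}\,[x^{2^{k-1}},G]$ with $[x^{2^{k-1}},G]=\langle[x^{2^{k-1}},y]\rangle=\langle t^{2^{k-2}}\rangle$ of order $2$; hence $x^{2^{k-1}}$ and $w_2^{2^{k-1}}$ are non-conjugate as soon as $\eta\notin\langle t^{2^{k-2}}\rangle$, which the calculation confirms. The pairs $(y,w_1)$ and $(xy,w_1w_2)$ are treated identically, using the $\phi$-symmetry and the hypothesis $n_1\equiv1\pmod4$. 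The genuinely delicate step throughout is extracting these $\gamma_3$-components of $2^{k-1}$-st powers and verifying that they avoid the relevant order-$2$ commutator subgroup; the rest of the argument is formal.
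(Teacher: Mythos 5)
Your proposal is correct and follows essentially the same route as the paper's proof: the six pairs whose images are independent modulo $\Phi(G)$ are dispatched by Lemma~\ref{orders preserve}, the three ``parallel'' pairs are settled by locating the $2^{k-1}$-st powers inside $\gamma_3(G)=\langle t,w\rangle$ and comparing them with the conjugacy classes of $x^{2^{k-1}}$, $y^{2^{k-1}}$, $(xy)^{2^{k-1}}$, and strong reality comes from the inverting automorphism of the triangle group. The only (cosmetic) difference is that the paper bounds each of these classes inside a coset of $\langle t\rangle$, $\langle w\rangle$, $\langle tw\rangle$ by passing to a quotient where the relevant power is central, whereas you compute the class exactly as $a^{2^{k-1}}$ times an order-$2$ subgroup; the $\gamma_3$-components you defer to ``the calculation'' do come out as you predict (e.g.\ $w_2^{2^{k-1}}=x^{2^{k-1}}\eta$ with the $w$-part of $\eta$ of order $2$, hence $\eta\notin\langle t^{2^{k-2}}\rangle$), so the plan closes.
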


\begin{proof}
	Let $A= \{x,y, xy\}$ and $B=\{w_1, w_2, w_1w_2\}$. We need to show that
	\begin{equation}
	\label{check}
	\langle a^g\rangle
	\cap
	\langle b\rangle=1,
	\end{equation}
	for all $a\in A$, $b\in B$, and $g\in G$.
	
	Recall that by the proof of Theorem \ref{special quotients}, $\exp G=2^k$ and also $o(xy)=2^k$, and hence $o(a)=2^k$ for every $a \in A$. Thus if (\ref{check}) does not hold, then
	$b^{2^{k-1}} \in \langle (a^{2^{k-1}})^g\rangle$ for some $g\in G$.
	
	Let us start with the case $a=x$ and $b=w_2$. Set $\overline{G}=G/ \langle t\rangle$. Then it can be seen that $\overline{x}^{2^{k-1}}$ is central in $\overline{G}$, that is,
	$
	\Cl_{G}(x^{2^{k-1}}) \subseteq  x^{2^{k-1}}\langle t \rangle.
	$
	On the other hand, observe that the following general formula holds, which follows from the identities given above. If $g,h \in G$, $c\in G'$ and $ \gamma_2(\langle h, G'\rangle) \leq \langle t \rangle$, then
	\begin{equation}
	\label{general eqn}
	\begin{split}
	(g^{2}hc)^{2^{k-1}}& \equiv g^{2^k}h^{2^{k-1}}
	[h,g,g]^{\binom{2^{k-1}}{2}} \pmod{\langle t \rangle}.
	\end{split}
	\end{equation}
	 Since $w_2=y^{n_2}x^{n_2+1}c$, where $c \in G'$ and $n_2=4m+2$ for some $m\in \N$, it follows from formula (\ref{general eqn}) that
		\begin{align*}
		w_2^{2^{k-1}}
		\equiv
		x^{(n_2+1)2^{k-1}} w^{N} \pmod{\langle t \rangle},
		\end{align*}
    where
    \[
    N= (n_2+1)(2m+1)^2\binom{2^{k-1}}{2} \vspace{5pt}
    \]
    is not divisible by $2^{k-1}$.
	Hence $w_2^{2^{k-1}}\notin \Cl_G(x^{2^{k-1}})$.
	
	Now assume that $a=y$ and $b=w_1$. Set $\overline{G}=G/\langle w\rangle$. Then $\overline{y}^{2^{k-1}}$ is central in $\overline{G}$, that is,
	$\Cl_{G}(y^{2^{k-1}}) \subseteq
	 y^{2^{k-1}}\langle w \rangle$.
	On the other hand, we have $w_1= x^{n_1+1}y^{n_1}c$ for some $c\in G'$, and $n_1+1=4m+2$ for some $m \in \N$. Then applying formula (\ref{general eqn}) modulo $\langle w \rangle$, we get
	\begin{align*}
	w_1^{2^{k-1}}
	\equiv
	y^{n_1{2^{k-1}}}t^{N} \pmod{ \langle w \rangle},
	\end{align*}
	for some $N$ which is not divisible by $2^{k-1}$, and hence
	$w_1^{2^{k-1}}\notin \Cl_G(y^{2^{k-1}})$.
	
	 We now consider the case $a=xy$ and $b=w_1w_2$. Set $\overline{G}=G/\langle tw\rangle$. Then  $\overline{xy}^{2^{k-1}}$ is central, that is,
	 $
	 \Cl_{G}((xy)^{2^{k-1}}) \subseteq
	 (xy)^{2^{k-1}}\langle tw \rangle .
	 $
	 Note that $w_1w_2=x^2(xy)^{n_1+n_2}c$ for some $c\in G'$. As before,
	 \begin{align*}
	  (w_1w_2)^{2^{k-1}} \equiv (xy)^{2^{k-1}}t^{(n_1+n_2)\binom{2^{k-1}}{2}} \pmod{\langle tw\rangle},
	  \end{align*}
	  where $2 \nmid n_1+n_2$, and hence $(w_1w_2)^{2^{k-1}}\notin \Cl_G((xy)^{2^{k-1}})$.

	Observe that in the remaining cases $a$ and $b$ lie in two different maximal subgroups of $G$, and so  $G=\langle a,b \rangle$. Since $G/G'\cong C_{2^k}\times C_{2^k}$ and $o(aG')=o(bG')=2^k$, it then follows that 
    $G/G'=\langle aG' \rangle\times \langle bG'\rangle$. Therefore in all these cases we can apply  Lemma \ref{orders preserve}. 
	This completes the proof that $G$ is a Beauville group.
	
	We know that $G\cong T/\gamma_4(T)$, where $T$ is the triangle group given in Theorem A. Since the automorphism $\theta$ of $T$ induces an automorphism of $T/\gamma_4(T)$ and $w_1$, $w_2$ are symmetric words in $x$ and $y$, we conclude that the Beauville structure $\{\{x,y\}, \{w_1, w_2\} \}$ is strongly real.
\end{proof}

\begin{lem}
	\label{p=3}
	Let $k\geq1$ and let \begin{multline*}
	G=
	\langle x,y,z,t,w \mid x^{3^k}=y^{3^k}=z^{3^k}=t^{3^k}=w^{3^k}=1, [y,x]=z,
	\\
	[z,x]=t, [z,y]=w \rangle.
	\end{multline*}
	Then $G$ is a strongly real Beauville group. More precisely, if $w_1=(xy)^{n_1}x$ and $w_2=(xy)^{n_2}x$ for $n_1, n_2 \in \Z^+$ such that $n_1 \equiv 1 \pmod{9}$ and $n_2\equiv 2 \pmod{9}$, then $\{x,y\}$ and $\{w_1, w_2\}$ form a strongly real Beauville structure for $G$.
\end{lem}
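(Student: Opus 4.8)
The plan is to prove that $\{\{x,y\},\{w_1,w_2\}\}$ is a Beauville structure and then obtain strong reality at no extra cost: since $G\cong T/\gamma_4(T)$, the automorphism $\theta\colon a\mapsto a^{-1}$, $b\mapsto b^{-1}$ descends to $\phi\in\Aut(G)$ with $\phi(x)=x^{-1}$ and $\phi(y)=y^{-1}$, and because $w_1,w_2$ are symmetric words in $x,y$ we also get $\phi(w_i)=w_i^{-1}$, exactly as in Lemmas~\ref{p>3 first quotient} and~\ref{p=2}. Writing $A=\{x,y,xy\}$ and $B=\{w_1,w_2,w_1w_2\}$, the entire problem reduces to showing that $\langle a^g\rangle\cap\langle b\rangle=1$ for all $a\in A$, $b\in B$ and $g\in G$. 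What makes this case harder than $p=2$ is that the elements of $A\cup B$ no longer share one order: by Theorem~\ref{special quotients}, $o(x)=o(y)=3^k$ while $o(xy)=3^{k+1}=\exp G$, so I must first determine which elements reach the exponent.

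First I would record all six orders together with the images in $G/\Phi(G)$. Because the contributions of the $z$- and $\gamma_3(G)$-parts to a $3^k$-th power vanish, $g^{3^k}$ depends only on the $x$- and $y$-exponents of $g$ modulo $G'$, so it suffices to collect $(x^\alpha y^\beta)^{3^k}$ by means of the collection identities (i) and (ii) above. The $x$-, $y$- and $z$-exponents of this power vanish, and its $\gamma_3(G)$-part equals $t^{D}w^{E}$ with $D\equiv\alpha^2\beta\binom{3^k}{3}$ and $E$ a fixed unit multiple of $\alpha\beta^2\binom{3^k}{3}$ modulo $3^k$; since $\binom{3^k}{3}$ is divisible by $3^{k-1}$ but not by $3^k$, this power is nontrivial exactly when $3\nmid\alpha\beta$. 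Thus an element has order $3^{k+1}$ precisely when both of its coordinates in $G/\Phi(G)\cong C_3\times C_3$ are nonzero. Reading off $x\mapsto(1,0)$, $y\mapsto(0,1)$, $xy\mapsto(1,1)$ and—using $n_1\equiv1$, $n_2\equiv2\pmod 9$—$w_1\mapsto(1,2)$, $w_2\mapsto(0,1)$, $w_1w_2\mapsto(1,0)$, I get $o(x)=o(y)=o(w_2)=o(w_1w_2)=3^k$ and $o(xy)=o(w_1)=3^{k+1}$, with the only coincidences of maximal subgroups being $\{x,w_1w_2\}$ and $\{y,w_2\}$.

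Six of the nine pairs then follow from Lemma~\ref{orders preserve}: whenever $a$ and $b$ lie in distinct maximal subgroups and at least one of them has order $3^k$ (equal to the order of its image in $G/G'$), the intersection is trivial. This leaves the three \emph{resonant} pairs $(x,w_1w_2)$, $(y,w_2)$ and $(xy,w_1)$—the last resonant not via a shared maximal subgroup but because $o(xy)=o(w_1)=3^{k+1}$, so Lemma~\ref{orders preserve} does not apply. For $(xy,w_1)$ the top powers $(xy)^{3^k}$ and $w_1^{3^k}$ lie in $\gamma_3(G)=\langle t,w\rangle$ and are central, so a nontrivial intersection would force $\langle w_1^{3^k}\rangle=\langle(xy)^{3^k}\rangle$; but the computation above yields that, for $g$ of order $3^{k+1}$, the direction of $g^{3^k}$ in $\gamma_3(G)/(\gamma_3(G))^{3}$ depends injectively on the maximal subgroup $(\alpha:\beta)$ containing $g$ (explicitly it is $(\alpha:u\beta)$ for a fixed unit $u$), so $xy$ and $w_1$ lying in different maximal subgroups gives $\langle w_1^{3^k}\rangle\neq\langle(xy)^{3^k}\rangle$. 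For the two order-$3^k$ pairs a nontrivial intersection would instead force the order-$3$ subgroups to coincide, i.e. $b^{3^{k-1}}\in\Cl_G(a^{3^{k-1}})\cup\Cl_G(a^{-3^{k-1}})$; as $b^{3^{k-1}}\equiv a^{-3^{k-1}}\pmod{G'}$, only the second class is live. I would eliminate it by passing to $G/(G')^{3^{k-1}}$, where $a^{3^{k-1}}$ becomes central—so that $\Cl_G(a^{-3^{k-1}})$ lies in the single coset $a^{-3^{k-1}}(G')^{3^{k-1}}$—and then collecting $b^{3^{k-1}}$ to produce a $\gamma_3(G)$-contribution outside $(G')^{3^{k-1}}$.

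The hard part will be exactly these last collection computations: one must read off the $t$- and $w$-exponents of $b^{3^{k-1}}$ accurately enough to certify that they escape $(G')^{3^{k-1}}$. The prime $3$ is genuinely stiffer than $2$ here, because $o(z)=3^k$ (instead of $3^{k-1}$) destroys the cancellation that made $a^{3^{k-1}}$ automatically central in Lemma~\ref{p=2}; centrality returns only after quotienting by $(G')^{3^{k-1}}$. The most delicate point is the base case $k=1$, where $(G')^{3^{k-1}}=G'$ and the coset argument collapses. There I would compute $\Cl_G(a^{-1})$ directly, observing that inside the coset $a^{-1}G'$ one of the two $\gamma_3(G)$-coordinates is forced by the $\gamma_2(G)$-coordinate while the other is free, and then checking that $w_2$ (respectively $w_1w_2$) breaks this forced relation.
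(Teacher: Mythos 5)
Your architecture matches the paper's almost exactly: strong reality via the automorphism $\theta$ of $T$ descending to $G\cong T/\gamma_4(T)$; the order computation showing $o(g)=3^{k+1}$ iff both coordinates of $g$ in $G/\Phi(G)$ are nonzero; the disposal of six of the nine pairs via Lemma \ref{orders preserve}; and the identification of the three remaining pairs $(xy,w_1)$, $(y,w_2)$, $(x,w_1w_2)$. Your treatment of $(xy,w_1)$ (comparing the directions of the two top powers inside $\langle t^{3^{k-1}},w^{3^{k-1}}\rangle$) is correct and is essentially the paper's argument.

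The gap is in the two pairs $(y,w_2)$ and $(x,w_1w_2)$ for $k>1$. You propose to show $b^{3^{k-1}}\notin a^{-3^{k-1}}(G')^{3^{k-1}}$ by exhibiting a $\gamma_3(G)$-contribution of $b^{3^{k-1}}$ lying outside $(G')^{3^{k-1}}$. This test is vacuous: writing $b=a^{-1}h$ with $h\in\Phi(G)$ and applying Hall--Petrescu, one gets $h^{3^{k-1}}\in (G')^{3^{k-1}}$, $c_2^{\binom{3^{k-1}}{2}}\in (G')^{3^{k-1}}$ and $c_3^{\binom{3^{k-1}}{3}}\in\gamma_3(G)^{3^{k-1}}$, so $b^{3^{k-1}}$ \emph{always} lies in $a^{-3^{k-1}}(G')^{3^{k-1}}$. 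Concretely, for $b=w_2$ the computation gives $w_2^{3^{k-1}}\equiv y^{2\cdot 3^{k-1}}t^{3^{k-1}u}\pmod{\langle w\rangle}$ with $u$ a unit, and $t^{3^{k-1}u}\in (G')^{3^{k-1}}$, so no coset obstruction appears. What actually separates $b^{3^{k-1}}$ from $\Cl_G(a^{-3^{k-1}})$ is that the class is a \emph{proper} $3$-element subset of the $27$-element coset, cut out by a forced linkage between the $z$-exponent and the $t$-exponent (respectively $w$-exponent): e.g.\ $\Cl_{\overline G}\big((\overline y^{\,2})^{3^{k-1}}\big)=\{(\overline y^{\,2})^{3^{k-1}}(\overline z^{\,2i}\overline t^{\,2\binom{i}{2}})^{3^{k-1}}\mid i=0,1,2\}$ in $\overline G=G/\langle w\rangle$, where the $z$-exponent $0$ of $w_2^{3^{k-1}}$ forces $i=0$ and hence $t$-exponent $0$, contradicting $t^{3^{k-1}u}$. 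This ``forced relation'' argument is exactly the one you reserve for the base case $k=1$; it is in fact needed for every $k$, and it is what the paper carries out (working modulo $\langle w\rangle$ for $(y,w_2)$ and modulo $\langle t\rangle$ for $(x,w_1w_2)$). So the idea you have for $k=1$ is the right one; the cruder coset argument you plan for $k>1$ cannot succeed.
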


\begin{proof}
	First of all, we will show that for any $g\in G$ and $h\in \Phi(G)$, we have
	\begin{equation}
	\label{power relation}
	(gh)^{3^k}=g^{3^k}.
	\end{equation}
	By the Hall-Petrescu formula, we have
	\begin{equation*}
	\label{hall-petrescu}
	(gh)^{3^{k}}= g^{3^{k}}h^{3^{k}}c_2^{\binom{3^{k}}{2}}c_3^{\binom{3^{k}}{3}} ,
	\end{equation*}
	where $c_i \in \gamma_i(\langle g,h \rangle)$. Note that $\exp G'=3^k$. Also notice that $\langle g,h\rangle' \leq \langle g, \Phi(G)\rangle' \leq [G,\Phi(G)]$. Then  
	\[
    \gamma_3(\langle g,h\rangle)\leq	[\Phi(G),G,G]=[G'G^3,G,G]=\gamma_3(G)^3.
	\]
	Thus $(gh)^{3^k}=g^{3^k}h^{3^k}$.
	On the other hand, since $\Phi(G)=\langle x^3,y^3, G'\rangle$ and since $\exp G'=3^k$ and $o(x^3)=o(y^3)=3^{k-1}$, it follows that $\exp \Phi(G)=3^k$, and hence $(gh)^{3^k}=g^{3^k}$, as desired.
	
	Let $A= \{x,y, xy\}$ and $B=\{w_1, w_2, w_1w_2\}$.
    Let us start with the case $a=xy$ and $b=w_1$. Observe that $b$ and $xy^2$ lie in the same maximal subgroup of $G$, and this, together with  (\ref{power relation}), implies that $\langle (b^g)^{3^k} \rangle=\langle (xy^2)^{3^k} \rangle$, for all $g\in G$. Recall that by the proof of Theorem \ref{special quotients}, for any $n \in \N$ we have
    \begin{equation}
    \label{powers of xy}
    (xy)^{n}=x^ny^nz^{\binom{n}{2}}t^{M}w^{N},
    \end{equation}
    where  $N=\frac{(n-1)n(2n-1)}{6}$ and $M=\binom{n}{3}$. A similar calculation shows that
	\begin{equation*}
	\begin{split} 
	(xy^2)^n &
	=x^ny^{2n}z^{2\binom{n}{2}} t^{2M}w^{4N+\binom{n}{2}}.
	\end{split}
	\end{equation*}
	Thus both $xy$ and $xy^2$ are of order $3^{k+1}$. If we take $n=3^k$, then  $(xy)^{3^k}=t^{\binom{3^k}{3}}w^{N}$ and  $(xy^2)^{3^k}=t^{2\binom{3^k}{3}}w^{4N}$, where $N= \frac{3^k(3^k-1)(23^k-1)}{6}$. Consequently, $\langle (xy)^{3^k} \rangle\cap \langle (b^g)^{3^k}\rangle=1$. 
	
	Next we assume that $a=y$ and $b=w_2$. Notice that $w_2 \equiv y^2 \pmod{\Phi(G)}$. Set $\overline{G}=G/\langle w\rangle$. Then we have
	\[
	\Cl_{\overline{G}}((\overline{y}^2)^{3^{k-1}})=
	\{ (\overline{y}^2)^{3^{k-1}}(\overline{z}^{2i}
	\overline{t}^{2\binom{i}{2}})^{3^{k-1}} \mid i=0,1,2 \}.
	\]
	On the other hand, as in the proof of Lemma \ref{p=2},
	it can be similarly shown that if $g,h \in G$, $c\in G'$ and $ \gamma_2(\langle h, G'\rangle) \leq \langle w \rangle$, then
	\begin{equation}
	\label{general eqn for p=3}
	\begin{split}
	(g^{3}hc)^{3^{k-1}}& \equiv g^{3^k}h^{3^{k-1}}c^{3^{k-1}} \pmod{\langle w \rangle}.
	\end{split}
	\end{equation}
    Observe that
    \[
    (xy)^{n_2}x \equiv x^{n_2+1}y^{n_2}z^{\binom{n_2+1}{2}} t^{\binom{n_2+1}{3}} \pmod{\langle w \rangle}.
    \]
    Thus by  formula (\ref{general eqn for p=3}), we get
	\begin{align}
	\label{power of w_2}
	w_2^{3^{k-1}}
	\equiv
	(y^2)^{3^{k-1}}(z^{\binom{n_2+1}{2}}t^{\binom{n_2+1}{3}})^{3^{k-1}}
	\equiv
	(y^2)^{3^{k-1}}t^{3^{k-1}}
	 \pmod{\langle w \rangle},
	\end{align}
	since $n_2\equiv 2 \pmod{9}$.
	Therefore, we have
	$\langle (a^g)^{3^{k-1}}\rangle \neq \langle b^{3^{k-1}} \rangle$ for any $g\in G$. Since by formulas (\ref{power relation}) and (\ref{power of w_2}), $o(w_2)=3^k$, we conclude that $\langle a^g \rangle \cap \langle b\rangle=1$ for any $g\in G$.

	Now we consider the case $a=x$ and $b=w_1w_2$. Note that $w_1w_2 \equiv x^2 \pmod{\Phi(G)}$. Set $\overline{G}=G/\langle t\rangle$. Then we have
	\[
	\Cl_{\overline{G}}((\overline{x}^2)^{3^{k-1}})=
	\{ (\overline{x}^2)^{3^{k-1}}(\overline{z}^{\ -2i}
	\overline{w}^{\ -2\binom{i}{2}})^{3^{k-1}} \mid i=0,1,2 \}.
	\]
 On the other hand, observe that $w_1w_2 \equiv (xy)^{n_1+n_2}x^2z^{-n_2}w^{-\binom{n_2}{2}} \pmod{\langle t \rangle}$.
 Then by applying formula (\ref{general eqn for p=3}) modulo $\langle t\rangle$ and by taking into account formula (\ref{powers of xy}), we get
\begin{equation}
\label{power of w_1w_2}
 \begin{split} 
 (w_1w_2)^{3^{k-1}}& \equiv (xy)^{(n_1+n_2)3^{k-1}}(x^2)^{3^{k-1}}z^{-n_23^{k-1}}w^{-\binom{n_2}{2}3^{k-1}}\\
 & \equiv (x^2)^{3^{k-1}}z^{-n_23^{k-1}} w^{-\binom{n_2}{2}3^{k-1}+N} \pmod{\langle t \rangle},
 \end{split}
 \end{equation}
 where $N= \sum_{i=1}^{s3^k-1}i^2 \equiv- s3^{k-1} \pmod{3^k}$ and $n_1+n_2=3s$ for some $s \equiv 1 \pmod{3}$.
 Observe that there is no $i\in \N$ such that $2i \equiv n_2 \pmod{3}$ and $\binom{n_2}{2}+s \equiv 2\binom{i}{2} \pmod{3}$. Thus $\langle (a^g)^{3^{k-1}}\rangle \neq \langle b^{3^{k-1}} \rangle$ for any $g\in G$. Since $o(w_1w_2)=3^k$, by (\ref{power relation}) and (\ref{power of w_1w_2}), we conclude that $\langle a^g\rangle \cap \langle b \rangle=1$ for all $g\in G$, as desired.  
 
 We next deal with the cases $a=x$ and $b=w_1$ or $w_2$, or $a=y$ and $b=w_1$ or $w_1w_2$, or $a=xy$ and $b=w_2$ or $w_1w_2$. In all these cases, we have $G=\langle a,b \rangle$. Since $o(aG')=o(bG')=3^k$ and $G/G'\cong C_{3^k}\times C_{3^k}$, it follows that $G/G'=\langle aG' \rangle\times \langle bG'\rangle$. Also notice that one of the two elements $a$ or $b$ has  the same order, namely $3^k$, in both $G$ and $G/G'$. Hence we apply Lemma \ref{orders preserve}.
\end{proof}

The following result, which gives a sufficient condition to lift a Beauville structure from a quotient group, is Lemma 4.2 in \cite{FJ}.

\begin{lem}
	\label{lifting structure}
	Let $G$ be a finite group and let $\{x_1,y_1\}$ and $\{x_2,y_2\}$ be two sets of generators of $G$.
	Assume that, for a given $N\trianglelefteq G$, the following hold:
	\begin{enumerate}
		\item
		$\{x_1N,y_1N\}$ and $\{x_2N,y_2N\}$ form a Beauville structure for $G/N$.
		\item
		$o(g)=o(gN)$ for every $g\in\{x_1,y_1,x_1y_1\}$.
	\end{enumerate}
	Then $\{x_1,y_1\}$ and $\{x_2,y_2\}$ form a Beauville structure for $G$.
\end{lem}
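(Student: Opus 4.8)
The plan is to verify directly the two defining conditions of a Beauville structure: that $\{x_1,y_1\}$ and $\{x_2,y_2\}$ are generating sets, and that $\Sigma(x_1,y_1)\cap\Sigma(x_2,y_2)=1$ in $G$. Since $\{x_1,y_1\}$ generates $G$, the group is automatically $2$-generated, so only the disjointness of the two $\Sigma$-sets requires argument. The whole proof is a two-stage mechanism: first push down to $G/N$ to trap an arbitrary common element inside $N$, then use the order hypothesis to kill it.

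First I would take an arbitrary $u\in\Sigma(x_1,y_1)\cap\Sigma(x_2,y_2)$ and reduce modulo $N$. Since the canonical projection $G\to G/N$ commutes with forming conjugates and cyclic subgroups, the image $uN$ lies in $\Sigma(x_1N,y_1N)\cap\Sigma(x_2N,y_2N)$. By hypothesis (i) this intersection is trivial, whence $uN=N$; that is, every element of $\Sigma(x_1,y_1)\cap\Sigma(x_2,y_2)$ already lies in $N$.

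The second step is where hypothesis (ii) enters, and it suffices to invoke it on the first triple alone. Since $u\in\Sigma(x_1,y_1)$, there exist $a\in\{x_1,y_1,x_1y_1\}$, an integer $i$, and $g\in G$ with $u=(a^i)^g$. Because $u\in N$, the element $(aN)^i$ is conjugate to the identity in $G/N$ and hence trivial, so $o(aN)\mid i$. Hypothesis (ii) gives $o(a)=o(aN)$, and therefore $o(a)\mid i$, forcing $a^i=1$ and thus $u=(a^i)^g=1$. This proves $\Sigma(x_1,y_1)\cap\Sigma(x_2,y_2)=1$, and the lemma follows.

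I expect no serious obstacle here; the statement is elementary once one isolates this two-stage structure. The only point worth flagging is the asymmetry of the hypotheses—the order-preservation condition is imposed solely on the triple $\{x_1,y_1,x_1y_1\}$ and not on the second triple. The reason this asymmetry is harmless is precisely that after the reduction modulo $N$ the common element $u$ is confined to $N$, so it can be annihilated using the cyclic factor coming from \emph{either} generating set; choosing the first triple, whose orders are controlled by (ii), is exactly what makes the final order divisibility work.
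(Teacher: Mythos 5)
Your proof is correct. The paper does not prove this lemma itself but cites it as Lemma 4.2 of Fuertes--Jones \cite{FJ}, and your two-stage argument (reduce modulo $N$ to trap a common element in $N$, then use order preservation on the first triple to force it to be trivial) is exactly the standard argument for this result; your remark explaining why the order hypothesis is needed only for one of the two triples is also the right observation.
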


We are now ready to give the proof of Theorem A.

\begin{thm}
	\label{main theorem}
	Let $p$ be a prime and let $T=\langle  a,b \mid a^q=b^q=(ab)^r=1 \rangle$ be the triangle group where $q=p^k>2$ for some $k \in \N$ and  $r= p^{k+1}$ if $p=3$ or $r=p^k$ if $p\neq3$. Then the following hold:
	\begin{enumerate}
		\item The lower central quotient $T/\gamma_n(T)$ is a strongly real Beauville group for $n\geq 3$ if $p>3$, or $n\geq 4$ if $p=2,3$.
		\item The series $\{\gamma_n(T)\}_{n\geq 3}$ if $p>3$ and $\{\gamma_n(T)\}_{n\geq 4}$ if $p=2$ or $3$ can be refined to a normal series of $T$ such that two consecutive terms of the series have index $p$ and for every term $N$ of the series $T/N$ is a strongly real Beauville group.
	\end{enumerate} 
\end{thm}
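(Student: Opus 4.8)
The plan is to derive both parts by lifting the Beauville structure $\{x,y\},\{w_1,w_2\}$ from the base quotients treated in Lemmas \ref{p>3 first quotient}, \ref{p=2} and \ref{p=3} by means of Lemma \ref{lifting structure}, and to obtain strong reality throughout from the inversion automorphism $\theta\colon T\to T$ given by $a\mapsto a^{-1}$, $b\mapsto b^{-1}$, taking $g_1=g_2=1$. Write $n_0=3$ if $p>3$ and $n_0=4$ if $p=2,3$, so that $T/\gamma_{n_0}(T)$ is exactly the group of those lemmas. Since $w_1,w_2$ are symmetric words in $a,b$ and $\theta$ inverts $a$ and $b$, whenever $\theta(N)=N$ the induced map on $T/N$ inverts $x,y,w_1,w_2$ simultaneously, so it suffices to produce a \emph{Beauville} structure on each $T/N$ with $N$ moreover $\theta$-invariant.

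The key to applying Lemma \ref{lifting structure} is an order-preservation fact. In $T$ the generators satisfy $o(a)=o(b)=p^k$ and $o(ab)=p^r$, and by Theorem \ref{special quotients} these are already the orders of $x$, $y$, $xy$ in $T/\gamma_{n_0}(T)$. Hence for any normal subgroup $N\trianglelefteq T$ with $\gamma_m(T)\subseteq N\subseteq\gamma_{n_0}(T)$ for some $m$, so that $T/N$ is a finite $p$-group, the chain of surjections $T\to T/N\to T/\gamma_{n_0}(T)$ squeezes $o(g)$ in $T/N$ between its value in $T/\gamma_{n_0}(T)$ (a lower bound) and its value in $T$ (an upper bound), which coincide; thus $o(g)=o(gN)$ for every $g\in\{x,y,xy\}$. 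As $\{x,y\}$ and $\{w_1,w_2\}$ generate $T/N$ (their images span $G/\Phi(G)\cong C_p\times C_p$ by the choice of $n_1,n_2$), Lemma \ref{lifting structure} lifts the Beauville structure to $T/N$, and if $\theta(N)=N$ this structure is strongly real. Part (i) follows immediately by taking $N=\gamma_n(T)$ for $n\geq n_0$, which is characteristic and hence $\theta$-invariant.

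For part (ii) I would refine one central layer at a time. Since $[\gamma_n(T),T]=\gamma_{n+1}(T)$, the factor $\gamma_n(T)/\gamma_{n+1}(T)$ is central in $T/\gamma_{n+1}(T)$, so each of its subgroups pulls back to a normal subgroup of $T$; moreover, because $\theta$ inverts both $a$ and $b$, it acts on $\gamma_n(T)/\gamma_{n+1}(T)$ as the scalar $(-1)^n$, whence every subgroup of this factor is automatically $\theta$-invariant. Therefore inside each abelian factor I may select a chain of subgroups with successive index $p$, each normal in $T$ and $\theta$-invariant, and splice these chains together to refine $\{\gamma_n(T)\}_{n\geq n_0}$ into a series with index-$p$ steps. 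Every term $N$ of the refinement satisfies $\gamma_m(T)\subseteq N\subseteq\gamma_{n_0}(T)$ and $\theta(N)=N$, so the preceding paragraph shows $T/N$ is a strongly real Beauville group.

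The step requiring most care is hypothesis (ii) of Lemma \ref{lifting structure}: everything hinges on the orders of $x$, $y$, $xy$ being already \emph{saturated} at the base level, equal to the orders of $a$, $b$, $ab$ in $T$, which is precisely what the explicit exponent computations in Theorem \ref{special quotients} provide. The only genuinely new ingredient over part (i) is arranging the refinement terms to be simultaneously normal in $T$ and $\theta$-invariant, and this is resolved cleanly by the centrality of the lower central factors together with the scalar action of $\theta$ on them.
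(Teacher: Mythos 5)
Your proposal is correct and follows essentially the same route as the paper: lift the Beauville structure from $T/\gamma_3(T)$ (resp.\ $T/\gamma_4(T)$) via Lemma \ref{lifting structure}, using the fact that the orders of $x$, $y$, $xy$ are already saturated at the base level, and get strong reality from a $\theta$-invariant refinement of the lower central series. Your refinement step is a slightly cleaner packaging of the paper's (you observe that $\theta$ acts as the scalar $(-1)^n$ on the central factor $\gamma_n(T)/\gamma_{n+1}(T)$, so \emph{every} subgroup there is normal and $\theta$-invariant, whereas the paper builds the chain explicitly from basic commutators and their $p$-th powers), but the substance is the same.
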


\begin{proof}
	Let $\theta$ be the automorphism of $T$ defined by $\theta(a)=a^{-1}$ and $\theta(b)=b^{-1}$.
	If we call $S$ the set of all commutators of length $i$ in $a$ and $b$, then  $\gamma_i(T)/\gamma_{i+1}(T)$ is generated by elements in $S$ modulo $\gamma_{i+1}(T)$. Also note that
	 \begin{align*}
	\theta([x_{j_1}, x_{j_2}, \dots, x_{j_i}])&=[x_{j_1}^{-1}, x_{j_2}^{-1}, \dots, x_{j_i}^{-1}]\\	
	&\equiv [x_{j_1}, x_{j_2}, \dots, x_{j_i}]^{\delta} \pmod{\gamma_{i+1}(T)},
	\end{align*}
	where each $x_{j_k}$ is either $a$ or $b$ and $\delta \in \{-1,1\}$. 
	Thus, by adding the elements in $S$ one by one to $\gamma_{i+1}(T)$, we produce a series of normal subgroups between $\gamma_{i+1}(T)$ and $\gamma_i(T)$ such that each normal subgroup is invariant under $\theta$.
	
	Let $M=\langle s_1, s_2, \dots, s_{\ell-1}, \gamma_{i+1}(T)\rangle$ and $L=\langle s_1, s_2, \dots, s_{\ell}, \gamma_{i+1}(T)\rangle$ be two consecutive terms of the above series, where each $s_k \in S$ and  $|L:M|=p^m$ for some $m\in \N$. If we set  $K_i=\langle M, s_{\ell}^{p^{i}}\rangle$ for $0\leq i \leq m$, then we get a chain of $\theta$-invariant normal subgroups
	\[
	M=K_m\leq K_{m-1}\leq \dots \leq K_1 \leq K_0=L 
	\] 
	such that two consecutive terms have index $p$.

	Hence we can refine the series $\{\gamma_n(T)\}_{n\geq 3}$ if $p>3$ and $\{\gamma_n(T)\}_{n\geq 4}$ if $p=2$ or $3$ to a normal series of $T$ such that two consecutive terms of the series have index $p$ and every term $N$ is invariant under $\theta$.
	
	We will see that $H=T/N$ is a strongly real Beauville group, which simultaneously proves (i) and (ii).  Let us call $u$ and $v$ the images of $a$ and $b$ in $T/\gamma_3(T)$, respectively.
		
	If $p>3$ then by Lemma \ref{p>3 first quotient}, we know that $T/\gamma_3(T) \cong G$ is a Beauville group with the Beauville structure $\{\{x,y\}, \{w_1, w_2\} \}$, where $x$ is sent to $u$ and $y$ is sent to $v$ by the isomorphism from $G$ to $T/\gamma_3(T)$. On the other hand, note that  $o(a)=o(b)=o(ab)=p^k$ modulo $\gamma_3(T)$ and modulo $N$. Then according to Lemma \ref{lifting structure}, the Beauville structure of $T/\gamma_3(T)$ is inherited by $H$.
    Similarly, if $p=2$ or $3$, the Beauville structure of $T/\gamma_4(T)$ given in Lemmas \ref{p=2} and \ref{p=3} is inherited by $H$.
	
	Since $N$ is invariant under $\theta$, the map $\theta$ induces an automorphism of $H$. Thus, clearly the Beauville structures are strongly real. This completes the proof.
\end{proof}

As a consequence of Lemmas \ref{p>3 first quotient}, \ref{p=2} and \ref{p=3}, the quotients of the triangle groups given in Theorem \ref{main theorem} have many different strongly real Beauville structures.

We close the paper by showing why we do not use the quotients of the triangle group $T=\langle a,b \mid a^{3^k}=b^{3^k}=(ab)^{3^k}=1\rangle$, giving a uniform treatment for all primes. In this case, unlike in Lemma \ref{p=3}, the quotient group $T/\gamma_4(T)$ is not a Beauville group.

\begin{lem}
Let $T=\langle  a,b \mid a^{3^k}=b^{3^k}=(ab)^{3^k}=1 \rangle$. Then $T/\gamma_4(T)$ is not a Beauville group.
\end{lem}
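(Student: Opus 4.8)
The plan is to realise $H:=T/\gamma_4(T)$ concretely and then show that the element orders in $H$ leave no room for a Beauville structure. Since the only change from Theorem~\ref{special quotients}(ii) is the relation $(ab)^{3^k}=1$ in place of $(ab)^{3^{k+1}}=1$, I would first identify $H$ as the quotient of the group $G$ of Theorem~\ref{special quotients}(ii) by the subgroup $\langle (xy)^{3^k}\rangle$, which is central of order $3$ because $o(xy)=3^{k+1}$ in $G$. Using the power formula $(xy)^n=x^ny^nz^{\binom{n}{2}}t^{\binom{n}{3}}w^{N}$ and the companion formula for $(xy^2)^n$ from the proof of Lemma~\ref{p=3}, together with the relation $(gh)^{3^k}=g^{3^k}$ for $h\in\Phi(H)$ (which descends from $G$ exactly as established there), I would record the orders of elements according to their image in $H/\Phi(H)\cong C_3\times C_3$. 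The outcome is that three of the four maximal subgroups, namely the directions of $x$, of $y$ and of $xy$, have exponent $3^k$, while the fourth, the direction of $xy^2$, consists entirely of elements of order $3^{k+1}$; moreover $\exp\Phi(H)\le 3^k$, so every element of order $3^{k+1}=\exp H$ lies in this last direction. The crucial contrast with $G$ is that killing $(xy)^{3^k}$ turns the direction of $xy$ from a long direction into a short one, leaving a \emph{single} long direction.

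Next I would extract the central obstruction. Put $c=(xy^2)^{3^k}$; it lies in $\gamma_3(H)$, hence is central since $H$ has class $3$, it has order $3$, and for every element $g$ of order $3^{k+1}$ the relation $(gh)^{3^k}=g^{3^k}$ gives $g^{3^k}\in\langle c\rangle\setminus\{1\}$, so that $\langle c\rangle\le\langle g\rangle$. Because $c$ is central, any two elements $g_1,g_2$ of order $3^{k+1}$ satisfy $\langle g_1\rangle\cap\langle g_2\rangle^{h}\supseteq\langle c\rangle\neq 1$ for every $h\in H$. Consequently, in any putative Beauville structure at most one of the two triples can contain an element of order $3^{k+1}$; since all such elements lie in the single direction of $xy^2$, at least one triple avoids that direction and therefore occupies exactly the three directions of $x$, $y$ and $xy$. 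As each triple occupies three of the four directions of $H/\Phi(H)$, the two triples must share at least two of these three short directions.

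Finally, and this is the step I expect to be the main obstacle, I would show that a shared short direction always forces a collision: if $a$ and $b$ have order $3^k$ and lie in the same direction, then $\langle a^{g}\rangle\cap\langle b\rangle\neq 1$ for some $g\in H$. The content is the claim that, within each short direction, all elements of order $3^k$ have order-$3$ socles $\Omega_1(\langle\,\cdot\,\rangle)$ lying in a single $H$-conjugacy class (up to inversion); once this is known, a shared short direction makes two of the six cyclic subgroups meet after conjugation, and no Beauville structure can exist. To prove the claim I would compute, for a fixed representative $u$ of a short direction and $h\in\Phi(H)$, the socle $(uh)^{3^{k-1}}$ by means of the commutator identities preceding Lemma~\ref{p=2}, and compare it with $\Cl_H(u^{3^{k-1}})$ in the manner of the three displayed computations of Lemma~\ref{p=3}. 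The reason the analogous computations \emph{succeeded} in Lemma~\ref{p=3} but must now \emph{fail} is precisely that killing $(xy)^{3^k}$ collapses the two-dimensional socle $\Omega_1(\gamma_3(G))\cong C_3\times C_3$ down to the single subgroup $\langle c\rangle\cong C_3$: the $\gamma_3$-term (a power of $t$, respectively of $w$) that separated $b^{3^{k-1}}$ from the class $\Cl_H(a^{3^{k-1}})$ there is exactly the term that becomes trivial modulo $\langle (xy)^{3^k}\rangle$. Carrying out this bookkeeping for each of the three short directions, and for each admissible sign of the representatives, is the technical heart of the argument and completes the proof that $H$ is not a Beauville group.
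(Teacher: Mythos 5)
Your set-up and reduction are essentially the paper's: identify $T/\gamma_4(T)$ (the paper presents it directly, you realise it as a central quotient of the group of Theorem \ref{special quotients}(ii), which is correct), observe by pigeonhole on the four maximal subgroups that the two triples of any putative Beauville structure must share a direction lying among those of $x$, $y$, $xy$ (your analysis of the unique ``long'' direction of $xy^2$ via the central element $c$ is correct but not needed for this: any two of the four directions already contain a short one, and the paper instead uses the $S_3$-symmetry of the presentation to move the collision to the direction of $x$), and then reduce everything to the claim that inside a shared short direction the $3^{k-1}$-st powers of all elements generate a single conjugacy class of subgroups of order $3$. The gap is that you stop precisely at that claim: you label it ``the main obstacle'' and ``the technical heart'' and do not prove it, and it is where all of the content of the lemma sits. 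It is not routine. The paper has to (a) extract from the relator $(ba)^{3^k}=1$, read modulo $\langle [x,y,x]\rangle$, the relation $[x,y,y]^{3^{k-1}}\in\langle [x,y,x]^{3^{k-1}}\rangle$, so that $(G')^{3^{k-1}}=\langle [y,x]^{3^{k-1}},[x,y,x]^{3^{k-1}}\rangle$; (b) show $g^{3^{k-1}}\in x^{3^{k-1}}(G')^{3^{k-1}}$ for every $g\in x\Phi(G)$ via the Hall--Petrescu formula; and (c) prove the reverse inclusion $x^{3^{k-1}}(G')^{3^{k-1}}\subseteq \Cl_G(x^{3^{k-1}})$, which requires exhibiting the non-obvious conjugating elements $([y^{-1},x]y)^i[x,y]^j$. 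Your heuristic --- that the $\gamma_3$-term separating $b^{3^{k-1}}$ from $\Cl_G(a^{3^{k-1}})$ in Lemma \ref{p=3} is exactly the element you kill --- explains why one particular separation disappears, but a proof must show that \emph{no} separation survives, i.e.\ that the conjugacy class is the entire coset; that is step (c), and nothing in your outline supplies it.

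A secondary omission: for $k=1$ the exponent $3^{k-1}=1$ and your socle statement degenerates into the assertion that all elements of order $3$ in a short direction generate conjugate subgroups, which is a different claim requiring its own verification; the paper avoids this by showing that for $k=1$ the quotient has order $3^4$ and is therefore too small to be a Beauville $3$-group. You should either treat $k=1$ separately in the same way or check that your bookkeeping still closes in that degenerate case.
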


\begin{proof}
Let us call $G$ the quotient group 	$T/\gamma_4(T)$, and let $x$ and $y$ be the images of $a$ and $b$ in $G$, respectively. Since $G'$ is abelian, we have
\[
1=[x,y^{3^k}]=[x,y]^{3^k}[x,y,y]^{\binom{3^k}{2}}=[x,y]^{3^k}.
\]
The last equality follows from the fact that $\exp \gamma_3(G) | \exp  G/G'=3^k$. Thus $\exp G' | 3^k$.
Put $\overline{G}= G/\langle [x,y,x]\rangle$. Then $\langle \overline{x}, \overline{G'}\rangle$ is abelian, and thus
\[
\overline{1}=(\overline{yx})^{3^k}= \overline{y}^{3^k}\overline{x}^{3^k}[\overline{x}, \overline{y}]^{\binom{3^k}{2}}[\overline{x}, \overline{y}, \overline{y}]^{\binom{3^k}{3}}=[\overline{x}, \overline{y}, \overline{y}]^{3^{k-1}}.
\]
 Therefore, $[x,y,y]^{3^{k-1}} \in \langle [x,y,x]\rangle$. Since there is an automorphism of $G$ exchanging $x$ and $y$, we have $o([x,y,y])=o([x,y,x])$. Therefore, $[x,y,y]^{3^{k-1}} \in \langle [x,y,x]^{3^{k-1}}\rangle$.

Thus, if $k=1$ then $\gamma_3(G)= \langle [x,y,x] \rangle$, which is of order $3$, and hence $|G|=3^4$. Consequently, $G$ is not a Beauville group.

We now assume that $k>1$. For any $g \in G$ if we write $g=x_1x_2\dots x_n$, where $x_i=x$ or $y$, then by using induction on $n$ and by using the Hall-Petrescu formula, it can be easily seen that $g^{3^k} \in \gamma_3(G)^{3^{k-1}}$. Also for any $u\in \Phi(G)=G^3G'$, if we write $u=v^3c$ for some $v\in G$ and $c\in G'$, then we get $u^{3^{k-1}} \in (G')^{3^{k-1}}$.

We next show that for every $g\in x\Phi(G)$, we have \begin{equation}
\label{same powers}
g^{3^{k-1}} \in x^{3^{k-1}}(G')^{3^{k-1}}.
\end{equation} If we write $g=xu$ for some $u \in \Phi(G)=G^3G'$, then we have
 \[
 g^{3^{k-1}}=x^{3^{k-1}}u^{3^{k-1}}c_2^{\binom{3^{k-1}}{2}}c_3^{\binom{3^{k-1}}{3}},
 \]
 where $c_i \in \gamma_i(\langle x, u\rangle)$ for $i=1,2$. Observe that $\gamma_3(\langle x, u\rangle) \leq \gamma_3(G)^3$, and this together with $u^{3^{k-1}} \in (G')^{3^{k-1}}$, yields that $g^{3^{k-1}} \in x^{3^{k-1}}(G')^{3^{k-1}}$.
 
 Now we will show that
  \begin{equation}
  \label{cl of x}
  \Cl_G(x^{3^{k-1}})= \{ x^{3^{k-1}}[x^{3^{k-1}},g]  \mid g\in G \}=x^{3^{k-1}}(G')^{3^{k-1}}.
 \end{equation}
 For any $g\in G$, we have
 \[
 [g,x^{3^{k-1}}]=[g,x]^{3^{k-1}}[g,x,x]^{\binom{3^{k-1}}{2}} \in (G')^{3^{k-1}}.
 \]
 Thus $\Cl_G(x^{3^{k-1}}) \subseteq x^{3^{k-1}}(G')^{3^{k-1}}$. On the other hand, since $G'$ is abelian and $[x,y,y]^{3^{k-1}} \in \langle [x,y,x]^{3^{k-1}} \rangle$, we have $(G')^{3^{k-1}}=\langle  [y,x]^{3^{k-1}}, [x,y,x]^{3^{k-1}}\rangle$. Notice that
\[
[[y^{-1},x]y, x^{3^{k-1}}]=[y,x]^{3^{k-1}}, \ \text{and} \ 
[[x,y], x^{3^{k-1}}]=[x,y,x]^{3^{k-1}}.
\]
Then for any $i,j \in \N$, we have
\[
\big[\big([y^{-1},x]y \big)^i[x,y]^j,  x^{3^{k-1}}\big]= \big([y,x]^i[x,y,x]^j\big)^{3^{k-1}},
\]
and hence $(G')^{3^{k-1}} \subseteq \{[x^{3^{k-1}},g]  \mid g\in G\}$. Therefore, $x^{3^{k-1}}(G')^{3^{k-1}} \subseteq \Cl_G(x^{3^{k-1}}) $.

We finally show that $G$ is not a Beauville group. Suppose, on the contrary, that $\{x_1, y_1\}$ and $\{x_2, y_2\}$ form a Beauville structure for $G$. Set $A=\{x_1,y_1,x_1y_1\}$ and $B=\{x_2,y_2,x_2y_2\}$. Since $G$ has $4$ maximal subgroups, we have a collision
$\langle a \rangle\Phi(G)=\langle b\rangle \Phi(G)$ for some $a\in A$, $b\in B$, and actually it is the same as one of the maximal subgroups $\langle x \rangle \Phi(G)$, $\langle y \rangle \Phi(G)$ or $\langle xy \rangle \Phi(G)$. Now we will apply an automorphism of $G$ on the Beauville structure, and this gives another Beauville structure for $G$.
Observe that we can always find an automorphism of $T$ sending $b$ to $a$ or $ab$ to $a$. Thus, $G$ has an automorphism sending $y$ to $x$ or $xy$ to $x$. By applying this automorphism on the Beauville structure, we move the collision to the maximal subgroup $\langle x\rangle \Phi(G)$. However, equations (\ref{same powers}) and (\ref{cl of x}) yield that for any $g\in \langle x \rangle \Phi(G)\smallsetminus \Phi(G)$, $g^{3^{k-1}} \in \langle x^{3^{k-1}}\rangle^{h}$ for some $h\in G$. Consequently, $G$ cannot be a Beauville group.
\end{proof}

\section*{Acknowledgments}
I would like to thank G. Fern\'andez-Alcober for helpful comments and suggestions. I thank Ben Fairbairn for providing me with his preprints \cite{fai2} and \cite{fai3}. Also, I wish to thank the Department of Mathematics at the University of the Basque Country for its hospitality while this paper was being written. 

In order to test some groups of small order, we have used the computer algebra system GAP \cite{GAP}.

\end{document}